\documentclass[12pt]{amsart}
\usepackage{amssymb}
\setlength{\unitlength}{0.4in}

\setlength{\topmargin}{-9mm}
\setlength{\textheight}{9in}
\setlength{\oddsidemargin}{-0.1in}
\setlength{\evensidemargin}{-0.1in}
\setlength{\textwidth}{6.5in}
\setlength{\textwidth}{6.5in}
\setlength{\abovedisplayshortskip}{3mm}
\setlength{\belowdisplayshortskip}{3mm}

\vskip.1in
\usepackage{amsmath,amsfonts,amssymb,upgreek}

\usepackage[breaklinks]{hyperref}
\usepackage{amscd}
\usepackage{mathtools}

\theoremstyle{thmrm}
\usepackage{tikz}
\usepackage{enumerate}
\usetikzlibrary{calc,decorations.markings}
\usepackage{tikz-cd}
\usepackage{graphicx}

\theoremstyle{plain}
\newtheorem{no}{Notation}[section]

\newtheorem{thm}{Theorem}[section]
\newtheorem{lemma}[thm]{Lemma}
\newtheorem{prop}[thm]{Proposition}

\newtheorem{defn}[thm]{Definition}
\newtheorem{remark}[thm]{Remark}
\newtheorem{example}[thm]{Example}

\newcommand{\cd}{\operatorname{cd} }

\newcommand{\op}{\operatorname{op} }
\newcommand{\GL}{\operatorname{GL} }

\newcommand{\Cl}{\operatorname{Cl} }

\newcommand{\Aut}{\operatorname{Aut} }

\newcommand{\Irr}{\operatorname{Irr} }

\numberwithin{equation}{section}

\begin{document}
	
	\title{Representations of Skew Left Braces of order $pq$}
	
	\author{Nishant Rathee}
	\address{Stat-Math Unit, Indian Statistical Institute, 7 S. J. S. Sansanwal Marg, New Delhi, 110016, India} 
	\email{monurathee2@gmail.com, nishant@isid.ac.in}

	\author{Ayush Udeep$^*$}
	\address{Department of Mathematical Sciences, Indian Institute of Science Education and Research (IISER) Mohali, Sector 81, SAS Nagar, P O Manauli, Punjab 140306, India} 
	\email{udeepayush@gmail.com}

	\subjclass[2020]{16T25, 20C15, 20D10}
	\keywords{representation; skew left brace; groups of order $p^2q^2$; Yang--Baxter equation}
	
	\begin{abstract}
In this paper, we study the irreducible representations of skew braces of order \( pq \), which is equivalent to studying the representation theory of groups of order \( p^2q^2 \) arising from skew left braces, where \( p > q \) are primes. To achieve this, we classify all semidirect product groups \( \Lambda_A \) associated with skew left braces $A$ of order \( pq \), up to isomorphism.

\end{abstract}
	\maketitle

\section{Introduction}

Rump introduced the concept of left braces \cite{WR07}, providing an algebraic framework for constructing and classifying involutive, non-degenerate set-theoretic solutions of the Yang–Baxter equation. Guarnieri and Vendramin \cite{GV17} later extended this notion to skew left braces, a non-abelian generalization that yields a broader class of bijective, non-degenerate solutions. The study of skew left braces has since expanded in various directions, with significant focus on their algebraic properties, including nilpotency, extension theory, and their deep connections to Hopf-Galois theory \cite{DB18, CSV18, AV18}. These developments establish skew left braces as a central object of study, bridging multiple areas of algebra and mathematical physics. 

In recent years, the structural properties of skew left braces have been extensively studied (see \cite{LV23, LV24}).
A skew left brace is a set \(A\) equipped with two group structures, \((A, \cdot)\) and \((A, \circ)\), related by the identity 
\[
a\circ (b \cdot c) = (a \circ b) \cdot a^{-1} \cdot (a \circ c),\]
for all \( a,b,c \in A\), where \(a^{-1}\) denotes the inverse in the group \((A, \cdot)\). This structure induces a natural action 
\[
\lambda: (A, \circ) \to \operatorname{Aut} (A, \cdot), \quad \lambda_a(b) = a^{-1} \cdot (a \circ b),
\]
which is known as the lambda map associated with skew left brace $A$. This action allows one to construct the associated semidirect product group 
\[
\Lambda_A = (A, \cdot) \rtimes_{\lambda} (A, \circ).
\]

Letourmy and Vendramin \cite{LV24} defined a representation of a skew brace as a pair of representations on the same vector space, one for the additive group and the other for the multiplicative group, satisfying a certain compatibility condition (also see \cite{KT24}).  
It was observed in \cite{RSU24} that there exists a one-to-one correspondence between representations of a skew brace $A$ and those of $\Lambda_A$, marking a crucial breakthrough in the study of skew brace representations.
The group \(\Lambda_A\) has been studied and referred to as the triply factorized group in \cite{BR21}, where various structural properties, including left and right nilpotency, substructures, and Fitting-like ideals, were explored. These properties play a fundamental role in understanding the structure of skew left braces and their associated group representations. 

 Acri and Bonatto \cite{AB2020} classified all skew left braces of size $pq$, where \(p > q\) are prime numbers (see Theorem \ref{classification}). In the present paper, we classify the irreducible representations of all skew left braces of size \(pq\).

\begin{thm} \label{thm:mainresult}
	Let $A$ be a skew left brace of size $pq$. If $p \not\equiv 1 \bmod q$, then there are exactly $p^2 q^2$ many 1-dimensional representations. If $p \equiv 1 \bmod q$, then 
	\begin{enumerate}
		\item $A = B$ has exactly $p^2 q^2$ many 1-dimensional representations.
		\item $A = C$ has exactly $q^2$ many 1-dimensional representations and $p^2 - 1$ many $q$-dimensional irreducible representations of $A$.
		\item if $A = D$, or $F_{\gamma}$ for $1< \gamma < q$, or $G_{\mu}$ for $1< \mu \leq q$, then $A$ has exactly $q^2$ many 1-dimensional representations, $2(p-1)$ many $q$-dimensional irreducible representations and $(p-1)^2/q^2$ many $q^2$-dimensional irreducible representations.
		\item if $A = E$, or $F_q$, then $A$ has exactly $pq^2$ many 1-dimensional representations and $p^2 - p$ many $q$-dimensional irreducible representations.
	\end{enumerate}
	Here $B,C,D, E, F_\gamma$ and $G_\mu$ are the same as in Theorem \ref{classification}.
\end{thm}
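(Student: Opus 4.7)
The plan is to exploit the one-to-one correspondence from \cite{RSU24} between irreducible representations of a skew left brace $A$ and those of its associated semidirect product $\Lambda_A = (A,\cdot) \rtimes_\lambda (A,\circ)$. Since $|A| = pq$, each $\Lambda_A$ has order $p^2 q^2$, so the theorem reduces to enumerating the irreducible complex representations of $\Lambda_A$ for each brace $A$ in the Acri--Bonatto classification (Theorem~\ref{classification}).

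The first step is to identify $\Lambda_A$ as an abstract group. For each of the six families $B, C, D, E, F_\gamma, G_\mu$ I would write out the lambda map $\lambda \colon (A,\circ) \to \Aut(A,\cdot)$ from the Acri--Bonatto presentations and compute the semidirect product directly. The trivial brace $B$ yields $\Lambda_B \cong C_{pq} \times C_{pq}$. When $p \not\equiv 1 \pmod q$, the order of $\Aut(C_{pq})$ is coprime to $q$, so $\lambda$ must be trivial and $\Lambda_A$ is again abelian; this handles the first assertion of the theorem. For $p \equiv 1 \pmod q$, the remaining braces produce non-abelian $\Lambda_A$ whose Sylow $p$-subgroup $N$ is normal (since $p > q$) and elementary abelian of rank $2$, with a Sylow $q$-subgroup $Q$ of order $q^2$ acting on $N$ through a subgroup of $\GL_2(\mathbb{F}_p)$. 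The three non-abelian cases (2), (3), (4) should correspond to qualitatively different images for this action: a single non-trivial character of $Q$ on $N$ (case 2), two independent non-trivial characters (case 3), or one trivial and one non-trivial character (case 4).

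Once $\Lambda_A$ is known, I would enumerate its irreducibles by Clifford theory applied to $N$. The irreducible representations of $\Lambda_A$ correspond to pairs consisting of a $Q$-orbit on $\widehat N$ together with a character of the corresponding inertia quotient; as $Q$ is a $q$-group acting on a group of order $p^2$, every orbit has size $1$, $q$, or $q^2$, and the corresponding induced irreducibles have exactly those dimensions. The number of $1$-dimensional irreducibles equals the index of the commutator subgroup, while the higher-dimensional counts arise by tallying orbits of each size and multiplying by the number of characters of the stabilizer. A consistency check via $\sum (\dim \rho)^2 = p^2 q^2$ would validate each row of the theorem.

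The main obstacle will be handling the parametric families $F_\gamma$ (for $1 < \gamma < q$) and $G_\mu$ (for $1 < \mu \leq q$) uniformly. Although different parameter values may yield non-isomorphic braces, the theorem asserts that $F_\gamma$ for $1 < \gamma < q$, $G_\mu$ for $1 < \mu \leq q$, and $D$ all produce the same representation counts (case 3), while $F_q$ joins $E$ in case (4). Establishing this requires determining when the $Q$-action on $\widehat N \cong \mathbb{F}_p^2$ is $\GL_2(\mathbb{F}_p)$-conjugate to one with two independent non-trivial characters versus one with a trivial factor; equivalently, analysing when the two scalar characters by which $Q$ acts on the two $C_p$-factors of $N$ are both non-trivial and independent, or when one of them is trivial. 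This parameter-by-parameter check, though routine after the lambda maps are written down, is the most delicate part of the argument.
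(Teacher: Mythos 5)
Your proposal is correct in substance and shares the paper's overall skeleton -- reduce via the correspondence of \cite{RSU24} to the groups $\Lambda_A$ of order $p^2q^2$, compute each $\Lambda_A$ explicitly from the lambda maps as $(\mathbb{Z}_p\times\mathbb{Z}_p)\rtimes(\mathbb{Z}_q\times\mathbb{Z}_q)$ -- but the enumeration step is genuinely different. You count irreducibles by the Wigner--Mackey machine: simultaneously diagonalize the abelian $q$-group acting on $N\cong\mathbb{F}_p^2$ (possible since $q\mid p-1$), so the action is given by a pair of characters $(\chi_1,\chi_2)$ of $Q$, and then tally $Q$-orbits on $\widehat N$ of sizes $1,q,q^2$ together with characters of the stabilizers; carrying this out does reproduce exactly the counts $q^2,\,p^2-1$ (case 2), $q^2,\,2(p-1),\,(p-1)^2/q^2$ (case 3), and $pq^2,\,p^2-p$ (case 4). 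The paper instead never runs an orbit computation in the main proof: it uses Isaacs' results ($\cd(G)=\{1,q\}$ when there is an abelian normal subgroup of index $q$, the index $|G:G'|$ computed via $\Lambda_A'=A'\rtimes(A,\circ)'$, and the sum-of-squares identity) for $C$ and $E$, the decomposition $\Lambda_D\cong H\times H$ with $H=\mathbb{Z}_p\rtimes\mathbb{Z}_q$ for case 3, and disposes of the parametric families by exhibiting explicit isomorphisms $\Lambda_{F_\gamma}\cong\Lambda_{G_\mu}\cong\Lambda_D$ ($1<\gamma<q$) and $\Lambda_{F_q}\cong\Lambda_E$ obtained by twisting the semidirect-product action by an automorphism of $\mathbb{Z}_q\times\mathbb{Z}_q$ (Theorem \ref{semi isomorphism}); these isomorphisms also yield Theorem \ref{thm:classification} as a by-product, which your route does not give directly. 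Your approach, in exchange, is more uniform and constructive: it treats all parameters at once through the pair $(\chi_1,\chi_2)$ and produces the irreducibles themselves (as in the paper's Example \ref{example:q dim rep group theory}), with the $\sum(\dim\rho)^2=p^2q^2$ check built in. Two small points to tighten when writing it up: the parenthetical ``Sylow $p$-subgroup is normal since $p>q$'' needs a word more when $(p,q)=(3,2)$, where $q^2\equiv 1 \bmod p$, though your explicit computation of $\Lambda_A$ makes this moot; and the invariant separating case 2 from case 3 is not independence of nonzero characters per se but whether $\ker\chi_1=\ker\chi_2$ (for $C$ one has $\chi_1=\chi_2$), equivalently whether $Q$ acts faithfully on $N$ -- which is precisely the dichotomy the paper isolates in Lemma \ref{lemma:action not faithful}.
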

\noindent To prove Theorem \ref{thm:mainresult}, we obtain the irreducible representations of the associated group $\Lambda_{A}$ of order $p^2q^2$ and utilize the isomorphism of such groups. We also classify the groups $\Lambda_{A}$ associated with the skew left braces $A$ of size $pq$, up to isomorphism, in Theorem \ref{thm:classification}.

 To the best of our knowledge, representations of groups of order $p^2q^2$ have not been computed yet, owing to the fact that the representation theory of arbitrary groups of order \(p^2q^2\) is generally intractable.
Our results demonstrate how skew brace theory provides an effective framework for addressing problems in representation theory that are otherwise difficult to approach purely from a group-theoretic perspective.
 A key observation in our approach is that the commutator subgroup of \(\Lambda_A\) can be computed in terms of the commutator of the corresponding skew brace, a relation that was established in \cite{RSU24}.

We mention the preliminaries required in our computations in Section \ref{sec:prelims}. In Section \ref{sec:results}, we prove Theorem \ref{thm:mainresult} and \ref{thm:classification}. We conclude the article with examples in which we compute the irreducible representations of a skew brace and its associated group (see Example \ref{example: q dim rep skew brace} and \ref{example:q dim rep group theory}).

\section{Preliminaries} \label{sec:prelims}

Let $(A,\cdot ,\circ)$ be a skew left brace. The groups $(A,\cdot)$ and $(A, \circ)$ are called the {\it additive} and the {\it multiplicative} groups of $(A,\cdot ,\circ)$, respectively. The structure $(A, \cdot^{\op}, \circ)$ is also a skew left brace, called the {\it opposite} skew left brace of $A$, where $a \cdot^{\op} b := b \cdot a$ for all $a, b \in A$.
Note that the lambda map  $\lambda^{\op}$ of $(A, \cdot^{\op}, \circ)$ is a group homomorphism given by
$$
\lambda^{\op}:(A, \circ)\to \Aut(A, \cdot), \lambda^{\op}_a(b) = (a \circ b) \cdot a^{-1} = a \cdot \lambda_a(b) \cdot a^{-1}.
$$
 A subset $I$ of $A$ is called an {\it ideal} of $(A, \cdot, \circ)$ if it is a normal subgroup of both $(A,\cdot)$ and $(A,\circ)$, and $\lambda_a(I)\subseteq I$ for all $a\in A$. The {\it commutator} $A'$ of $(A, \cdot, \circ)$ is a subgroup of $(A, \cdot)$ generated by elements of the form $a \cdot b\cdot a^{-1} \cdot b^{-1}$ and $a \cdot \lambda_b(a^{-1})$ for all $a, b \in A$. The commutator $A'$ turns out to be an ideal of $(A, \cdot, \circ)$.

\begin{remark}
\textnormal{The commutator of a skew brace is the smallest ideal that makes the quotient a trivial brace.}
\end{remark}

\begin{no} 
	\textnormal{Before proceeding further, we set some notations:
	\begin{itemize}
		\item For groups $G$ and $H$ and a homomorphism $\psi: G \to \Aut(H)$, the group operation in the semi-direct product $H \rtimes_{\psi} G$ is given by 
		$$(h_1, g_1)(h_2, g_2)=(h_1 \psi_{g_1}(h_2), g_1 g_2)$$
		for all $h_1, h_2 \in H$ and $g_1, g_2 \in G$. 
		\item For a skew left brace $A:=(A, \cdot, \circ)$,   
		$$\Lambda_{A} = (A,\cdot)\rtimes_{\lambda}(A,\circ) \quad \textrm{and} \quad \Lambda_{A^{\op}} = (A,\cdot)\rtimes_{\lambda^{\op}}(A,\circ)$$ are the semi-direct products induced by $\lambda$ and $\lambda^{\op}$, respectively.
		\item For a group $G$ and an element $g$ of $G$, $G'$ denotes the commutator subgroup of $G$, $\Cl^G(g)$ denotes the conjugacy class of $g$ in $G$ and $k(G)$ denotes the number of conjugacy classes of $G$.
	\end{itemize}}
\end{no}

Next, we state a fundamental result concerning the isomorphism of semidirect product groups.

\begin{thm}\label{semi isomorphism}
Let $G$ and $H$ be groups, and let $\psi: G \to \Aut(H)$ be a homomorphism. For any $f \in \Aut(H)$, the semi-direct product groups $H \rtimes_{\psi f} G$ and $H \rtimes_{\psi} G$ are isomorphic under the map  
		\[
		\varphi: H \rtimes_{\psi f} G \to H \rtimes_{\psi} G, \quad (g,h) \mapsto (g, f(h)),
		\]
		for all $g \in G$ and $h \in H$.
\end{thm}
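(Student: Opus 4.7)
\medskip

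\noindent\textbf{Proof proposal.} The claim is a direct structural fact about semidirect products, so the plan is a routine two-step verification once the notation is unpacked. First I will interpret ``$\psi f$'': since $f\in\Aut(H)$, conjugation by $f$ is an automorphism of $\Aut(H)$, so the map $g\mapsto f^{-1}\psi_g f$ (what the authors write $\psi f$) is again a group homomorphism $G\to\Aut(H)$; therefore $H\rtimes_{\psi f}G$ is a bona fide semidirect product and the statement makes sense. I will record this observation at the start, so the rest is pure computation.

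Second, I will check that the proposed map $\varphi$, which on the level of underlying sets is just ``apply $f$ in the $H$-coordinate and leave the $G$-coordinate alone'', is a group homomorphism. Using the definition of the product in $H\rtimes_{\psi}G$ recorded in the Notation block, I compute
\[
\varphi\bigl((h_1,g_1)(h_2,g_2)\bigr)=\varphi\bigl(h_1\,(\psi f)_{g_1}(h_2),\,g_1g_2\bigr)=\bigl(f(h_1)\,\psi_{g_1}(f(h_2)),\,g_1g_2\bigr),
\]
where the last equality uses $f\bigl(f^{-1}\psi_{g_1}f(h_2)\bigr)=\psi_{g_1}(f(h_2))$, and compare this with
\[
\varphi(h_1,g_1)\,\varphi(h_2,g_2)=(f(h_1),g_1)(f(h_2),g_2)=\bigl(f(h_1)\,\psi_{g_1}(f(h_2)),\,g_1g_2\bigr).
\]
The two expressions agree, so $\varphi$ is a homomorphism.

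Third, for bijectivity I simply exhibit the inverse $(h,g)\mapsto(f^{-1}(h),g)$; it is well defined because $f^{-1}\in\Aut(H)$, and one sees by the same computation (with $f$ replaced by $f^{-1}$ and $\psi$ replaced by $\psi f$) that it is also a homomorphism, hence a two-sided inverse. This concludes the proof.

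I do not anticipate any real obstacle here: once the notation $\psi f$ is interpreted as the conjugated action $g\mapsto f^{-1}\psi_g f$, everything reduces to substituting into the semidirect product multiplication rule and observing that $f$ commutes with itself in the obvious way. The only point that merits a line of justification is that $g\mapsto f^{-1}\psi_g f$ is a homomorphism, which is immediate from the fact that conjugation by $f$ is an inner automorphism of $\Aut(H)$.
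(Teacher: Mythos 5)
Your computations are internally correct, but you have proved a different statement from the one the paper actually needs, because you resolved the ambiguous notation $\psi f$ the wrong way. You read $\psi f$ as the conjugated action $g\mapsto f^{-1}\psi_g f$ with $f\in\Aut(H)$, and verified that $(h,g)\mapsto (f(h),g)$ is an isomorphism $H\rtimes_{f^{-1}\psi(\cdot)f}G\to H\rtimes_\psi G$. That is a true (and standard) fact, but it is not how the theorem is used later in the paper. In Proposition \ref{prop:Agammaisomorphism} and Lemmas \ref{lemma:ismorphism of A3} and \ref{lemma:isomorphism of A4}, the maps playing the role of $f$ — namely $h_\gamma(x,y)=(x+(\gamma-1)y,\gamma y)$, $h_\mu(x,y)=(x,(\mu-1)y)$, $f(x,y)=(x+2y,y)$, $f(x,y)=(x,-y)$ — are automorphisms of the \emph{acting} group $G=\mathbb{Z}_q\times\mathbb{Z}_q$, not of $H=\mathbb{Z}_p\times\mathbb{Z}_p$, and the identities being checked there (compare \eqref{eq:psi of D} with \eqref{eq:delta of Alambda}) are of the form $(\psi f)_{(x,y)}=\psi_{f(x,y)}$, i.e.\ $\psi f$ means the precomposition $\psi\circ f$. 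Under your conjugation reading those decompositions do not even parse, since $h_\gamma\notin\Aut(\mathbb{Z}_p\times\mathbb{Z}_p)$; so your theorem cannot feed the later isomorphism arguments.

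The statement you should prove (the ``$f\in\Aut(H)$'' and the map $(g,h)\mapsto(g,f(h))$ in the paper's formulation are typos, also at odds with the paper's own $(h,g)$ ordering) is: for $f\in\Aut(G)$, the map $\varphi:H\rtimes_{\psi\circ f}G\to H\rtimes_\psi G$, $(h,g)\mapsto (h,f(g))$, is an isomorphism. The verification is just as short as yours: $\varphi\bigl((h_1,g_1)(h_2,g_2)\bigr)=\bigl(h_1\psi_{f(g_1)}(h_2),f(g_1)f(g_2)\bigr)=\varphi(h_1,g_1)\varphi(h_2,g_2)$, with inverse $(h,g)\mapsto(h,f^{-1}(g))$, and here $\psi\circ f$ is automatically a homomorphism $G\to\Aut(H)$, so no preliminary remark is needed. (The paper itself states the theorem without proof, so the only substantive issue is this mismatch of interpretation, not any error in your algebra.)
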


\begin{lemma} \label{prop:AcongAop} \label{commutator1}
	Let $A$ be a skew left brace. Then
	\begin{enumerate}
		\item [\rmfamily(i)] \textnormal{(\cite[Proposition 3.3]{RSU24})} $\Lambda_{A} \cong \Lambda_{A^{\op}}$, under the map $(x,y) \mapsto (xy,y)$.
		\item [\rmfamily(ii)] \textnormal{(\cite[Theorem 3.7]{RSU24})} $\Lambda_{A}'=A' \rtimes_{\lambda} (A, \circ)'$.  
	\end{enumerate}
\end{lemma}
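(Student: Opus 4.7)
The plan is to prove the two parts separately by direct computation.

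For part (i), I verify that $\varphi : (x, y) \mapsto (x y, y)$ is a group isomorphism. Bijectivity is immediate: the inverse of $\varphi$ sends $(z, y)$ to $(z \cdot y^{-1}, y)$. The homomorphism property is the substantive claim: one expands $\varphi\bigl((x_1, y_1)(x_2, y_2)\bigr)$ using the multiplication in $\Lambda_A$ (governed by $\lambda$), expands $\varphi(x_1, y_1) \cdot \varphi(x_2, y_2)$ using the multiplication in $\Lambda_{A^{\op}}$ (governed by $\lambda^{\op}$), and reconciles the two using the identity $\lambda^{\op}_a(b) = a \cdot \lambda_a(b) \cdot a^{-1}$ together with the skew brace axiom $y \circ (x \cdot y') = (y \circ x) \cdot y^{-1} \cdot (y \circ y')$. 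This is mechanical but requires careful bookkeeping of whether inverses and products are in $(A, \cdot)$ or $(A, \circ)$.

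For part (ii), I prove the two inclusions of $\Lambda_A' = A' \rtimes_\lambda (A, \circ)'$ separately. For $A' \rtimes_\lambda (A, \circ)' \subseteq \Lambda_A'$, first note that $A'$ being an ideal makes it $\lambda$-stable, so the right-hand side is a genuine subgroup of $\Lambda_A$. Since $(a, g) = (a, 1)(1, g)$, it suffices to show $(a, 1) \in \Lambda_A'$ for all $a \in A'$ and $(1, g) \in \Lambda_A'$ for all $g \in (A, \circ)'$. This is done by exhibiting the standard generators as commutators in $\Lambda_A$: the $(A, \cdot)$-commutator generators come from $[(a, 1), (b, 1)] = (a b a^{-1} b^{-1}, 1)$; the mixed generators $a \cdot \lambda_b(a^{-1})$ come from the short computation $[(a, 1), (1, b)] = (a \cdot \lambda_b(a^{-1}), 1)$; and every $\circ$-commutator arises as the second coordinate of some $[(1, y_1), (1, y_2)]$.

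For the reverse inclusion $\Lambda_A' \subseteq A' \rtimes_\lambda (A, \circ)'$, the plan is to show that $A' \rtimes_\lambda (A, \circ)'$ is normal in $\Lambda_A$ and that the quotient is abelian. Normality follows from the fact that $A'$ is normal in $(A, \cdot)$ and $\lambda$-invariant, while $(A, \circ)'$ is normal in $(A, \circ)$; this is checked by conjugating a typical element $(a, g)$ by elements of the forms $(x, 1)$ and $(1, y)$. For the abelianness of the quotient, the crucial observation is that the identity $a \cdot \lambda_b(a^{-1}) \in A'$ forces $\lambda_b$ to act as the identity on the abelian group $(A, \cdot)/A'$. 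Combined with commutativity of $(A, \cdot)/A'$ and $(A, \circ)/(A, \circ)'$, the semidirect-product multiplication descends to componentwise multiplication on the quotient, which is then manifestly abelian.

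The main obstacle is this last step. The hybrid nature of the skew brace is essential here: the $a \cdot \lambda_b(a^{-1})$ family of generators of $A'$ is precisely what is needed to neutralize the $\lambda$-twist in the semidirect product when passing to the quotient. The remaining steps (exhibiting generators as commutators, checking normality, and verifying the homomorphism property in part (i)) are formal manipulations once the defining identities of a skew left brace are in hand.
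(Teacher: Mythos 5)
The paper does not actually prove this lemma --- both parts are imported from \cite{RSU24} --- so I am judging your argument on its own merits. Part (ii) of your plan is correct and essentially complete: with the paper's convention $(h_1,g_1)(h_2,g_2)=(h_1\psi_{g_1}(h_2),g_1g_2)$ one indeed has $[(a,1),(b,1)]=(a b a^{-1}b^{-1},1)$, $[(a,1),(1,b)]=(a\cdot\lambda_b(a^{-1}),1)$ and $[(1,y_1),(1,y_2)]=(1,[y_1,y_2]_{\circ})$, which gives $A'\rtimes_{\lambda}(A,\circ)'\subseteq\Lambda_A'$; and your key observation that $a\cdot\lambda_b(a^{-1})\in A'$ forces $\lambda_b$ to induce the identity on $(A,\cdot)/A'$ yields a surjective homomorphism $\Lambda_A\to (A,\cdot)/A'\times(A,\circ)/(A,\circ)'$ whose kernel is exactly $A'\rtimes_{\lambda}(A,\circ)'$, giving the reverse inclusion (and making your separate normality check redundant, though harmless).

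Part (i), however, contains a step that fails as you have set it up. You expand $\varphi\bigl((x_1,y_1)(x_2,y_2)\bigr)$ in $\Lambda_A$ and $\varphi(x_1,y_1)\varphi(x_2,y_2)$ in $\Lambda_{A^{\op}}$, i.e.\ you treat $(x,y)\mapsto(x\cdot y,y)$ as a map from $\Lambda_A$ to $\Lambda_{A^{\op}}$. With the paper's conventions this map is \emph{not} a homomorphism, and no amount of bookkeeping will reconcile the two sides: already for a trivial brace ($\circ=\cdot$, $\lambda=\mathrm{id}$, $\lambda^{\op}_a=$ conjugation by $a$) over a nonabelian $(A,\cdot)$ one gets $\varphi\bigl((x_1,y_1)(x_2,y_2)\bigr)=(x_1x_2y_1y_2,\,y_1y_2)$ while $\varphi(x_1,y_1)\varphi(x_2,y_2)=(x_1y_1^2x_2y_2y_1^{-1},\,y_1y_2)$, and these differ for $x_1=x_2=1$ and noncommuting $y_1,y_2$. (This is not vacuous here: the braces $D,E,F_\gamma,G_\mu$ have nonabelian additive group.) The correct assertion is that $(x,y)\mapsto(x\cdot y,y)$ is an isomorphism \emph{from} $\Lambda_{A^{\op}}$ \emph{onto} $\Lambda_A$; equivalently the isomorphism $\Lambda_A\to\Lambda_{A^{\op}}$ is $(x,y)\mapsto(x\cdot y^{-1},y)$ --- the very map you list as the set-theoretic inverse. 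A clean way to see it: $\{(y,y):y\in A\}$ is a subgroup of $\Lambda_A$ isomorphic to $(A,\circ)$ (since $(a,a)(b,b)=(a\circ b,a\circ b)$), it complements the normal subgroup $(A,\cdot)\times\{1\}$, and conjugation of $(x,1)$ by $(y,y)$ equals $(\lambda^{\op}_y(x),1)$, exhibiting $\Lambda_A$ as $(A,\cdot)\rtimes_{\lambda^{\op}}(A,\circ)$ via $(x,y)\mapsto (x,1)(y,y)=(x\cdot y,y)$. So the flaw is only the direction of the map (admittedly invited by the lemma's own phrasing), but as written the central computation of part (i) would not go through.
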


In  \cite{LV23}, Letourmy and Vendramin defined a representation of a skew left brace $A$ as follows.
\begin{defn} A representation of a skew left brace $A$ is a triple $(V,\beta,\rho)$, where
	\begin{enumerate}[(1)]
		\item $V$ is a vector space over some field;
		\item $\beta : (A,\cdot)\longrightarrow \GL(V)$ is a representation of $(A,\cdot)$;
		\item $\rho: (A,\circ)\longrightarrow \GL(V)$ is a representation of $(A,\circ)$;
	\end{enumerate}
	such that the relation
	\begin{equation}\label{relation}
		\beta(\lambda^{\op}_a(b)) = \rho(a)\beta(b)\rho(a)^{-1}
	\end{equation}
	holds for all $a,b\in A$
	\end{defn}

\begin{lemma} \textnormal{\cite[Corollary 4.7]{RSU24}} \label{one one corres rep}
	There is a one-to-one correspondence between the set of equivalence classes of irreducible representations of a skew left brace $A$ and the set of equivalence classes of irreducible representations of the group $\Lambda_{A^{\op}}$.
\end{lemma}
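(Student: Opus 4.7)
The natural approach is to build an explicit bijection between representations on both sides and then check it preserves irreducibility and equivalence. First I would construct the forward map: given a representation $(V,\beta,\rho)$ of the skew left brace $A$, define $\pi : \Lambda_{A^{\op}} \to \GL(V)$ by $\pi(a,b) = \beta(a)\rho(b)$ for all $(a,b)\in \Lambda_{A^{\op}}$. To verify $\pi$ is a group homomorphism, I would unfold the product in $\Lambda_{A^{\op}}$ (which uses the twist $\lambda^{\op}$) and compute
\[
\pi\bigl((a_1,b_1)(a_2,b_2)\bigr) = \beta\bigl(a_1 \cdot \lambda^{\op}_{b_1}(a_2)\bigr)\rho(b_1 \circ b_2) = \beta(a_1)\,\beta(\lambda^{\op}_{b_1}(a_2))\,\rho(b_1)\rho(b_2),
\]
and then invoke the compatibility relation $\beta(\lambda^{\op}_{b_1}(a_2)) = \rho(b_1)\beta(a_2)\rho(b_1)^{-1}$ to collapse this to $\beta(a_1)\rho(b_1)\beta(a_2)\rho(b_2) = \pi(a_1,b_1)\pi(a_2,b_2)$.

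For the reverse direction, given $\pi : \Lambda_{A^{\op}} \to \GL(V)$, set $\beta(a) := \pi(a,1)$ and $\rho(b) := \pi(1,b)$. Since $\{(a,1) : a\in A\}$ and $\{(1,b) : b\in A\}$ are subgroups of $\Lambda_{A^{\op}}$ isomorphic to $(A,\cdot)$ and $(A,\circ)$ respectively, $\beta$ and $\rho$ are immediately group representations. The compatibility axiom will then follow by computing the conjugation
\[
(1,b)(a,1)(1,b)^{-1} = (\lambda^{\op}_b(a),1)
\]
inside the semidirect product and applying $\pi$, giving $\rho(b)\beta(a)\rho(b)^{-1} = \beta(\lambda^{\op}_b(a))$.

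Because every element $(a,b)\in \Lambda_{A^{\op}}$ factors uniquely as $(a,1)(1,b)$, the two constructions are mutual inverses on the nose. To finish the equivalence of categories of representations, I would observe that a subspace $W\subseteq V$ is stable under $\pi$ if and only if it is stable under both $\beta$ and $\rho$, since the images of $\beta$ and $\rho$ together generate the image of $\pi$; the same remark transfers intertwining operators, yielding a bijection on equivalence classes of irreducibles. The only mild obstacle will be keeping track of the opposite-brace convention so that the twist $\lambda^{\op}$ defining $\Lambda_{A^{\op}}$ matches the twist appearing in the compatibility axiom of a brace representation; once that bookkeeping is fixed, the argument is essentially formal.
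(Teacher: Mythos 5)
Your construction is correct, and it is exactly the correspondence underlying the cited result: the paper does not prove this lemma itself (it quotes \cite[Corollary 4.7]{RSU24}), but the map $(a,b)\mapsto\beta(a)\rho(b)$ on $\Lambda_{A^{\op}}$, with the compatibility relation \eqref{relation} making it multiplicative and the conjugation $(1,b)(a,1)(1,b)^{-1}=(\lambda^{\op}_b(a),1)$ recovering that relation in the reverse direction, is precisely the correspondence used in \cite{RSU24} and invoked via \cite[Remark 2.2]{KT24} in Example \ref{example: q dim rep skew brace}. Your bookkeeping with $\lambda^{\op}$ matches the paper's conventions, and the observation that invariant subspaces and intertwiners for $\pi$ are the same as those common to $\beta$ and $\rho$ correctly transfers irreducibility and equivalence classes.
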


\begin{remark} \label{remark:equivalence of reps}
\textnormal{By Lemma \ref{prop:AcongAop} and \ref{one one corres rep}, it is enough to classify the irreducible representations of $\Lambda_A$ to classify the irreducible representations of a skew left brace $A$.} 
\end{remark}
	
	\section{Results} \label{sec:results}
	In this section, we classify the groups $\Lambda_{A}$ for skew left braces $A$ of size $pq$, up to isomorphism. With the help of that, we classify the irreducible representations of all the skew left braces of size $pq$. We conclude the section with an example of the computation of an irreducible representation for one of the skew left braces. Throughout this section, $p$ and $q$ denote prime numbers such that $p > q$, and all the groups are finite.
	
	In Section \ref{subsec:classification}, we obtain results on the structural properties of the groups associated with skew left braces of size $pq$.
	\subsection{Groups arising from skew left braces of size $pq$} \label{subsec:classification}
	
		The following classification of skew left braces of size $pq$ is given in \cite{AB2020}.
	
		\begin{thm} \textnormal{\cite[Theorem, p. 1873]{AB2020}} \label{classification}
		 If $p \not\equiv 1 \pmod{q}$, there is only one skew brace of order $pq$, the trivial one. If $p \equiv 1 \pmod{q}$, a complete list of the $2q + 2$ skew left braces of order $pq$, up to isomorphism, is given below, where $g$ is a fixed element of $\mathbb{Z}_p$ of multiplicative order $q$:
		
		\begin{itemize}
			\item \textbf{Additive group} $\mathbb{Z}_p \times \mathbb{Z}_q$:  
			\[
			(n, m) + (s, t) = (n + s, m + t).
			\]
			\begin{enumerate}
				\item The trivial skew brace, denoted by $B$
				\item The bi-skew brace, denoted by $C$ where
				\[
				(n, m) \circ (s, t) = (n + g^m \cdot s, m + t).
				\]
			\end{enumerate}
			
			\item \textbf{Additive group} $\mathbb{Z}_p \rtimes \mathbb{Z}_q$:  
			\[
			(n, m) + (s, t) = (n + g^m \cdot s, m + t).
			\]
			\begin{enumerate}
				\item The trivial skew brace, denoted by $D$.
				\item The skew brace, denoted by $E$, where
				\[
				(n, m) \circ (s, t) = (g^t \cdot n + g^m \cdot s, m + t).
				\]
				\item The bi-skew braces, denoted by $F_\gamma$ for $1 < \gamma \leq q$, where
				\[
				(n, m) \circ (s, t) = (n + (g^{\gamma})^m \cdot s, m + t).
				\]
				\item The skew left braces, denoted by $G_\mu$ for $1 < \mu \leq q$, where
				\[
				(n, m) \circ (s, t) = (g^t \cdot n + (g^{\mu})^m \cdot s, m + t).
				\]
			\end{enumerate}
		\end{itemize}
		
	\end{thm}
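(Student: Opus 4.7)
The plan is to classify skew left braces $(A, \cdot, \circ)$ of order $pq$ by first identifying the possible isomorphism types of the additive and multiplicative groups, then enumerating compatible $\lambda$-maps, and finally collapsing by brace isomorphism. Any group of order $pq$ with $p > q$ prime is either the cyclic $\mathbb{Z}_p \times \mathbb{Z}_q$ or, only when $p \equiv 1 \pmod{q}$, the non-abelian $\mathbb{Z}_p \rtimes \mathbb{Z}_q$; thus each of $(A, \cdot)$ and $(A, \circ)$ lies in one of these two classes. As a useful reduction, the unique Sylow $p$-subgroup of $(A, \cdot)$ is $\lambda$-invariant (it is characteristic) and in fact an ideal, so the $p$-part and $q$-part of $\lambda$ can be analyzed somewhat independently.

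In the case $p \not\equiv 1 \pmod{q}$, both groups are $\mathbb{Z}_{pq}$ and $\Aut(\mathbb{Z}_{pq}) \cong \mathbb{Z}_{p-1} \times \mathbb{Z}_{q-1}$. Since $\mathbb{Z}_{p-1}$ has no element of order $q$ and $\mathbb{Z}_{q-1}$ has no element of order $p$, any homomorphism $\lambda : (A, \circ) \to \Aut(A, \cdot)$ must be trivial, so $\circ = \cdot$ and only the trivial brace appears. Now assume $p \equiv 1 \pmod{q}$. If $(A, \cdot) = \mathbb{Z}_p \times \mathbb{Z}_q$, then the restriction of $\lambda$ acts on the $\mathbb{Z}_p$-factor through a subgroup of $\Aut(\mathbb{Z}_p)$ of order dividing $q$; the skew brace axiom combined with $a \circ b = a \cdot \lambda_a(b)$ gives, after normalising by the choice of $g$, exactly the two classes $B$ (trivial) and $C$ (the bi-skew brace where $\lambda_{(n,m)}$ acts on $\mathbb{Z}_p$ by multiplication by $g^m$). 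If $(A, \cdot) = \mathbb{Z}_p \rtimes \mathbb{Z}_q$, I would first compute $\Aut(A, \cdot) \cong \mathbb{Z}_p \rtimes \mathbb{Z}_{p-1}$, write a general $\lambda_{(n,m)}$ explicitly in coordinates, and translate the skew brace identity into a polynomial system in its entries which can be solved by hand.

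The main obstacle is the isomorphism classification in this last case: the raw solution set for $\lambda$ is large but collapses drastically under the symmetry of replacing $\lambda$ by $f^{-1} \lambda_{f(\cdot)} f$ for $f \in \Aut(A, \cdot)$, which produces an isomorphic brace (cf.\ Theorem~\ref{semi isomorphism}). I would organise solutions first by the isomorphism type of $(A, \circ)$ and then by the orbit of $\lambda$ under these symmetries; this should yield precisely the four families $D$ (trivial), $E$ (whose $(A, \circ)$ is abelian, distinguishing it from the others), $F_\gamma$ for $1 < \gamma \leq q$, and $G_\mu$ for $1 < \mu \leq q$, totalling $1 + 1 + (q-1) + (q-1) = 2q$ classes. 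The delicate point is showing that distinct $\gamma$ and distinct $\mu$ give non-isomorphic braces and that no collapse occurs between the families; invariants such as the isomorphism class of $(A, \circ)$ and the conjugacy class of $\operatorname{Im}(\lambda)$ inside $\Aut(A, \cdot)$ are what separate them. Adding $B$ and $C$ from the abelian additive case produces the stated total of $2q + 2$.
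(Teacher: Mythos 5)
You should first note that the paper does not prove this statement at all: it is quoted verbatim from Acri--Bonatto \cite{AB2020} and used as a black box, so there is no in-paper proof to compare against. Judged on its own terms, your strategy (fix the isomorphism type of $(A,\cdot)$, enumerate the admissible $\lambda$-maps, then quotient by the action of $\Aut(A,\cdot)$) is the natural one and is broadly the route taken in the cited source. The case $p\not\equiv 1\pmod q$ and the reduction of the abelian-additive case to $B$ and $C$ are essentially complete.

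There are, however, two concrete gaps. First, in the case $(A,\cdot)=\mathbb{Z}_p\rtimes\mathbb{Z}_q$ the entire enumeration is deferred: here $\Aut(A,\cdot)$ has order $p(p-1)$, so $\operatorname{Im}(\lambda)$ may contain elements of order $p$ (as it does for $E$ and $G_\mu$), and the ``polynomial system'' you propose to solve by hand is exactly where the work lies; nothing in the proposal guarantees that its solution set is what you assert. Second, and more seriously, the invariants you offer to show that distinct $\gamma$ (resp.\ $\mu$) give non-isomorphic braces do not work. For every $1<\gamma<q$ the lambda map of $F_\gamma$ is $\lambda_{(n,m)}(s,t)=(g^{(\gamma-1)m}\cdot s,\,t)$, whose image is the \emph{same} cyclic subgroup of order $q$ of $\Aut(\mathbb{Z}_p\rtimes\mathbb{Z}_q)$ (the maps $(s,t)\mapsto(g^k\cdot s,t)$) independently of $\gamma$; likewise $(F_\gamma,\circ)$ is the non-abelian group of order $pq$ for all $1<\gamma<q$. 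So both the isomorphism class of $(A,\circ)$ and the conjugacy class of $\operatorname{Im}(\lambda)$ coincide across this family, yet the theorem asserts the $F_\gamma$ are pairwise non-isomorphic. The separation must instead come from analysing the equivalence $\lambda\sim f\circ\lambda_{f^{-1}(-)}\circ f^{-1}$ for $f\in\Aut(\mathbb{Z}_p\rtimes\mathbb{Z}_q)$ directly and checking that it cannot change the exponent $\gamma$; until that computation is done, the claimed count of $2q+2$ classes is not established.
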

Next, we establish an important result concerning the commutator of a skew brace of order \( pq \).

	\begin{lemma}\label{lemma:commu}
		Let $A$ be a skew left brace of order $pq$. Then, $|A'| = p$ unless $A$ is a trivial brace.
	\end{lemma}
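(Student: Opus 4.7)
The plan is to case-split on Theorem \ref{classification} and use the explicit generators of $A'$ recalled in the preliminaries: $A'$ is generated as a subgroup of $(A, \cdot)$ by the additive commutators $a \cdot b \cdot a^{-1} \cdot b^{-1}$ together with the $\lambda$-type elements $a \cdot \lambda_b(a^{-1})$. The trivial brace $B$ on $\mathbb{Z}_p \times \mathbb{Z}_q$ contributes no generators of either type, so I can set it aside and focus on the remaining $2q+1$ braces $C, D, E, F_\gamma, G_\mu$. For each of these I would prove the two matching bounds $|A'| \geq p$ and $|A'| \leq p$ separately.

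For the upper bound, I would compute $\lambda_{(n,m)}(s,t) = -(n,m) + ((n,m) \circ (s,t))$ in each of the five families using the $\circ$-formulas of Theorem \ref{classification}. In every case the second coordinate of the result is simply $t$, so every $\lambda_b$ preserves the $\mathbb{Z}_q$-coordinate, and hence every generator $a \cdot \lambda_b(a^{-1})$ has second coordinate zero. Thus each such generator lies in the subgroup $H := \{(x, 0) : x \in \mathbb{Z}_p\}$ of $(A, \cdot)$. The additive commutators also lie in $H$: either they are trivial (when the additive group is $\mathbb{Z}_p \times \mathbb{Z}_q$), or the derived subgroup of $(A, \cdot) = \mathbb{Z}_p \rtimes \mathbb{Z}_q$ is exactly $H$ (since $(A, \cdot)/H \cong \mathbb{Z}_q$ is abelian while $(A, \cdot)$ itself is not). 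So $A' \subseteq H$, a subgroup of order $p$.

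For the lower bound, because $H$ has prime order, I only need one non-identity element. For $D, E, F_\gamma, G_\mu$ the additive group is non-abelian, so $H = [(A, \cdot), (A, \cdot)] \subseteq A'$ automatically. For $C$, substituting $a = (1, 0)$ and $b = (0, 1)$ into $a \cdot \lambda_b(a^{-1})$ yields $(1 - g, 0)$, which is nonzero because $g$ has multiplicative order $q > 1$ in $\mathbb{Z}_p$. Combining both bounds gives $|A'| = p$ in each of the $2q + 1$ non-trivial cases. The main obstacle is purely computational: one must be careful with the non-abelian inversion formula $-(n, m) = (-g^{-m}n, -m)$ when working inside $\mathbb{Z}_p \rtimes \mathbb{Z}_q$, but once the $\lambda$-maps are read off correctly the argument is structural.
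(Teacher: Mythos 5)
Your argument is correct, but it takes a genuinely different and heavier route than the paper. You verify the statement brace-by-brace against the Acri--Bonatto list (Theorem \ref{classification}), computing each $\lambda$-map from the $\circ$-formulas and locating generators of $A'$ inside the order-$p$ subgroup $H=\mathbb{Z}_p\times\{0\}$; the steps you sketch are all sound (the second coordinate of every $\lambda$-type generator vanishes because second coordinates add in both additive groups, the additive commutators land in $H$, and the lower bound follows either from the nonabelian additive structure or, for $C$, from the explicit element $(1-g,0)\neq 0$). The paper instead argues structurally and classification-free: since $p>q$, the Sylow $p$-subgroup $P$ of $(A,\cdot)$ is unique, hence characteristic, hence $\lambda$-invariant, hence an ideal; the quotient $A/P$ has prime order $q$ and is therefore a trivial brace, so the characterization of $A'$ as the smallest ideal with trivial-brace quotient gives $A'\subseteq P$; and if $A$ is not trivial then $A'\neq 1$, forcing $A'=P$ of order $p$. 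Your approach buys explicit generators of $A'$ (e.g.\ $(1-g,0)$ for $C$), which can be convenient in the later representation computations, but it costs dependence on the full classification and $2q+1$ case checks, whereas the paper's three-line argument needs neither. One small bookkeeping remark: $D$ is also labelled a trivial skew brace in Theorem \ref{classification}, so under a strict reading of ``trivial brace'' the lemma makes no claim about it; including it, as you do, is harmless and proves slightly more, since $|D'|=p$ indeed holds (it is just the derived subgroup of $\mathbb{Z}_p\rtimes\mathbb{Z}_q$).
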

	
	\begin{proof}
		Since $p > q$, the additive group of $A$ has a unique subgroup of order $p$, which we denote by $P$. It is straightforward to verify that $P$ is characteristic and thus invariant under the lambda map, making it a normal subgroup of the multiplicative group and hence an ideal.  Moreover, since $A/P$ has order $q$, it must be a trivial brace. Now, suppose that $A$ is not a trivial brace. Since we know that $|A^\prime| > 1$ and that $A^\prime \subset P$, it follows that $A^\prime = P$.
	\end{proof}
	
	Now, we obtain the structural properties of the groups arising from the skew left braces of size $pq$. These results help us in classifying such groups in Theorem \ref{thm:classification}.
	We shall use the notations of Theorem \ref{classification}.
	
	If $p \not\equiv 1 \bmod q$, then there is only the trivial skew brace, say $A$. In this case, $\Lambda_{A} = \mathbb{Z}_{pq} \times \mathbb{Z}_{pq}$, which is an abelian group of order $p^2 q^2$. 
	
		Now, assume that $p \equiv 1 \bmod q$. By Theorem \ref{classification}, the skew left braces are: $B$, $C$, $D$, $E$, $F_{\lambda}$ for $1< \lambda \leq q$, and $G_{\mu}$ for $1 < \mu \leq q$.
	We deal with each skew left brace separately.\\
	
\noindent	{\bf Skew left brace $B$:} Since $B$ is a trivial skew brace, $\Lambda_{ B} = \mathbb{Z}_{pq} \times \mathbb{Z}_{pq}$, which is an abelian group of order $p^2 q^2$.\\

\noindent	{\bf Skew left brace $ C$:} By Lemma \ref{lemma:commu}, we obtain $| C^\prime| = p$. Moreover, it is easy to see that $( C, \circ)$ is non-abelian. Hence, from Lemma \ref{commutator1}, we get $|\Lambda_{ C}^\prime| = p^2$.
Thus, $\Lambda_{ C}$ is a non-abelian group of order $p^2q^2$.	The associated lambda map of $C$ is given by  
\begin{equation} \label{eq:lambda of A2}
		\lambda_{(n,m)}(s, t) = (-n,-m) + ((n,m) \circ (s,t)) =  (g^{m} \cdot s, t), 
\end{equation}
$\text{ for all } n, s \in \mathbb{Z}_p \text{ and } m, t \in \mathbb{Z}_q$. Now, let $P$ and $Q$ denote the subgroups $\mathbb{Z}_{p} \times \{ 0 \} \times \mathbb{Z}_{p} \times \{ 0 \} \cong \mathbb{Z}_{p} \times \mathbb{Z}_{p}$ and $\{ 0 \} \times \mathbb{Z}_{q} \times \{ 0 \} \times \mathbb{Z}_{q} \cong \mathbb{Z}_{q} \times \mathbb{Z}_{q}$ of $\Lambda_{C}$, respectively. Take $(x,y) \in Q$ and $(a,b) \in P$. Then 
\begin{align*}
	& ((0,x),(0,y)) \ast ((a,0), (b,0)) \ast ((0, x), (0,y))^{-1} \\
	& = ((0,x), \lambda_{(0,y)}(a,0), (b,0)) \ast ((0, -x), (0, -y))
	= ((g^{y}\cdot a, x), (g^{y}\cdot b, y)) \ast ((0, -x), (0, -y)) \\
	& = ((g^{y}\cdot a, 0), (g^{y}\cdot b, 0)).
\end{align*}
	Hence,  
	$
	\Lambda_{C} \cong (\mathbb{Z}_{p} \times \mathbb{Z}_{p}) \rtimes_{\theta} (\mathbb{Z}_{q} \times \mathbb{Z}_{q}),
	$  
	where 
	\begin{equation} \label{eq:theta of C}
	\theta:  \mathbb{Z}_q \times \mathbb{Z}_q \rightarrow \Aut(\mathbb{Z}_p \times \mathbb{Z}_p), \quad	\theta_{(x,y)}(a,b) = (g^y \cdot a, g^y \cdot b),
	\end{equation}  
	for all \( x, y \in \mathbb{Z}_{q} \) and \( a, b \in \mathbb{Z}_{p} \).\\
	
	\noindent	{\bf Skew left brace $ D$:} Since $ D$ is a trivial skew brace over $\mathbb{Z}_p \rtimes \mathbb{Z}_{q}$, $\Lambda_{ D} = \mathbb{Z}_{p} \rtimes \mathbb{Z}_{q} \times \mathbb{Z}_{p} \rtimes \mathbb{Z}_{q}$. Hence, $|\Lambda_{ D}^\prime| = p^2$. Further, it is easy to see that the associated lambda map of $D$ is given by 
	\[ \lambda_{(n,m)}(s, t) =  (s, t), \text{ for all } n, s \in \mathbb{Z}_p \text{ and } m, t \in \mathbb{Z}_q. \]
	Direct computations show that 
	$
	\Lambda_{ D} \cong (\mathbb{Z}_{p} \times \mathbb{Z}_{p}) \rtimes_{\psi} (\mathbb{Z}_{q} \times \mathbb{Z}_{q}),
	$  
	where 
	\begin{equation} \label{eq:psi of D}
	 \psi:  \mathbb{Z}_q \times \mathbb{Z}_q \rightarrow \Aut(\mathbb{Z}_p \times \mathbb{Z}_p), \quad	\psi_{(x,y)}(a,b) = (g^x \cdot a, g^y \cdot b),
	\end{equation}  
	for all \( x, y \in \mathbb{Z}_{q} \) and \( a, b \in \mathbb{Z}_{p} \).\\
	
	\noindent {\bf Skew left brace $ E$:} It is easy to verify that $(E, \circ)$ is abelian. By Lemma \ref{commutator1} and \ref{lemma:commu}, it follows that $|\Lambda_{ E}'| = p$. The associated lambda map of $E$ is given by  
	\[
	\lambda_{(n,m)}(s, t) = \left(s + g^{-m}(g^t - 1) \cdot n, t\right).
	\]  
	
	 Direct computations show that  
	\[
	\Lambda_{ E} \cong (\mathbb{Z}_p \times \mathbb{Z}_p) \rtimes_{\xi} (\mathbb{Z}_q \times \mathbb{Z}_q),
	\]  
	where  
	\begin{equation} \label{eq:xi of E}
		\xi:  \mathbb{Z}_q \times \mathbb{Z}_q \rightarrow \Aut(\mathbb{Z}_p \times \mathbb{Z}_p), \quad
		\xi_{(x,y)}(a,b) = (g^x a + b (1 - g^x), b),
	\end{equation} 
	  for all \( x, y \in \mathbb{Z}_{q} \) and \( a, b \in \mathbb{Z}_{p} \).\\
	
	\noindent {\bf Skew left brace $F_{\gamma}$:} The lambda map of $F_{\gamma}$ is given by 
	$$\lambda_{(n,m)}(s,t)=(g^{\gamma m -m} \cdot s, t).$$
	Direct calculations shows that, for $1< \gamma \leq q$, 
	\[
	\Lambda_{F_{\gamma}} \cong (\mathbb{Z}_p \times \mathbb{Z}_p) \rtimes_{\delta_{\gamma}} (\mathbb{Z}_q \times \mathbb{Z}_q),
	\]  
	where  
	\begin{equation} \label{eq:delta of Alambda}
	\delta_{\gamma}: \mathbb{Z}_q \times \mathbb{Z}_q \to \Aut(\mathbb{Z}_p \times \mathbb{Z}_p), \quad	{\delta_{\gamma}}_{(x,y)}(a,b) = (g^{x+y(\gamma - 1)} \cdot a, g^{\gamma y} \cdot b),
	\end{equation} 
	for all \( x, y \in \mathbb{Z}_q \) and \( a, b \in \mathbb{Z}_p \).
	\begin{enumerate}
		\item {\bf When $\gamma = q$:} In this case, $|\Lambda_{F_{q}}'| = p$ since $(F_q, \circ)$ is abelian.  Further, $\Lambda_{F_q} \cong \mathbb{Z}_{p} \times \mathbb{Z}_{p} \rtimes_{\delta_{q}} \mathbb{Z}_{q} \times \mathbb{Z}_{q}$, where the action $\delta_{q}$ is given by  
		\begin{equation} \label{eq:deltaq}
			{\delta_{q}}_{(x,y)}(a,b) = (g^{x-y} \cdot a, b), \quad \text{for all } x, y \in \mathbb{Z}_q \text{ and } a, b \in \mathbb{Z}_p.
		\end{equation} 
		\item {\bf  When $1< \gamma < q$:} In this case, $|\Lambda_{F_{\gamma}}'| = p^2$ since $|(F_{\gamma}, \circ)'| = p$.
	\end{enumerate}
	
	\noindent {\bf Skew left brace $G_{\mu}$:} In this case, $|\Lambda_{G_{\mu}}'| = p^2$ since $|(G_{\mu}, \circ)'| = p$ for $1 < \mu \leq q$. The lambda map is given by 
	\[ \lambda_{(n,m)}(s, t) = ( g^{-m}(g^t - 1) \cdot n + g^{m}\cdot s, t). \]
		Direct calculations shows that, for $1< \mu \leq q$, 
	\[
	\Lambda_{G_{\mu}} \cong (\mathbb{Z}_p \times \mathbb{Z}_p) \rtimes_{\delta_{\mu}} (\mathbb{Z}_q \times \mathbb{Z}_q),
	\]  
	where $\delta_{\mu}: \mathbb{Z}_q \times \mathbb{Z}_q \to \Aut(\mathbb{Z}_p \times \mathbb{Z}_p)$ given by  
	\begin{equation} \label{eq:delta of Amu}
		{\delta_{\mu}}_{(x,y)}(a,b) = \left( g^{x+(\mu - 1)y} \cdot a + (1 - g^x) g^{(\mu - 1)y} \cdot b, \, g^{(\mu - 1)y} \cdot b \right),
	\end{equation}
	for all \( x, y \in \mathbb{Z}_q \) and \( a, b \in \mathbb{Z}_p \).
	\par
	
	\begin{prop} \label{prop:Agammaisomorphism}
		Each group \( \Lambda_{F_{\gamma}} \) is isomorphic for \( 1 < \gamma < q \), and each group \( \Lambda_{G_{\mu}} \) is isomorphic for \( 1 < \mu \leq q \).
	\end{prop}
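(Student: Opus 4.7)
The plan is to invoke a standard variant of Theorem~\ref{semi isomorphism}: whenever $\psi_1, \psi_2 \colon G \to \Aut(H)$ differ by precomposition with an automorphism $\alpha \in \Aut(G)$, so that $\psi_2 = \psi_1 \circ \alpha$, the assignment $(h, g) \mapsto (h, \alpha^{-1}(g))$ defines an isomorphism $H \rtimes_{\psi_1} G \to H \rtimes_{\psi_2} G$ (a one-line check from the semidirect product multiplication rule). I will exhibit both families $\{\delta_\gamma\}_{1 < \gamma < q}$ and $\{\delta_\mu\}_{1 < \mu \leq q}$ as precompositions of a single, parameter-independent action with an automorphism of $\mathbb{Z}_q \times \mathbb{Z}_q$, from which the proposition follows at once.

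For the $F_\gamma$ case, let $D \colon \mathbb{Z}_q \times \mathbb{Z}_q \to \Aut(\mathbb{Z}_p \times \mathbb{Z}_p)$ be the diagonal action $D(u, v)(a, b) = (g^u \cdot a, g^v \cdot b)$. Comparing with \eqref{eq:delta of Alambda} one reads off $\delta_\gamma = D \circ \alpha_\gamma$, where $\alpha_\gamma(x, y) = (x + (\gamma - 1) y, \gamma y)$. Since $1 < \gamma < q$, the scalar $\gamma$ is a unit modulo $q$, so $\alpha_\gamma$ has determinant $\gamma \in \mathbb{Z}_q^{\times}$ and lies in $\Aut(\mathbb{Z}_q \times \mathbb{Z}_q)$. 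Consequently every $\Lambda_{F_\gamma}$ with $1 < \gamma < q$ is isomorphic to $(\mathbb{Z}_p \times \mathbb{Z}_p) \rtimes_D (\mathbb{Z}_q \times \mathbb{Z}_q)$, and in particular to one another.

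For the $G_\mu$ case, let $T \colon \mathbb{Z}_q \times \mathbb{Z}_q \to \Aut(\mathbb{Z}_p \times \mathbb{Z}_p)$ be the action $T(u, v)(a, b) = (g^{u+v} \cdot a + (1 - g^u) g^v \cdot b,\, g^v \cdot b)$. Reading \eqref{eq:delta of Amu} with the substitution $u = x$, $v = (\mu - 1) y$ yields $\delta_\mu = T \circ \beta_\mu$, where $\beta_\mu(x, y) = (x, (\mu - 1) y)$. For $1 < \mu \leq q$, $\mu - 1$ lies in $\{1, \dots, q - 1\}$ and is thus a unit in $\mathbb{Z}_q$, so $\beta_\mu \in \Aut(\mathbb{Z}_q \times \mathbb{Z}_q)$, and every $\Lambda_{G_\mu}$ with $1 < \mu \leq q$ is isomorphic to $(\mathbb{Z}_p \times \mathbb{Z}_p) \rtimes_T (\mathbb{Z}_q \times \mathbb{Z}_q)$.

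The main obstacle I anticipate is simply spotting the correct reparametrizations $\alpha_\gamma$ and $\beta_\mu$; once they are written down, everything reduces to the immediate observation that $\gamma$ and $\mu - 1$ are units in $\mathbb{Z}_q$ for the given parameter ranges. The exclusion of $\gamma = q$ (i.e.\ $F_q$) from the first family is consistent with this approach, since $\gamma \equiv 0 \pmod{q}$ would make $\alpha_\gamma$ non-invertible, and indeed $F_q$ was already isolated in the excerpt as giving a structurally different action.
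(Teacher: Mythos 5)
Your proof is correct and follows essentially the same route as the paper: the same decompositions $\delta_\gamma = \psi \circ h_\gamma$ with $h_\gamma(x,y)=(x+(\gamma-1)y,\gamma y)$ and $\delta_\mu = \beta \circ h_\mu$ with $h_\mu(x,y)=(x,(\mu-1)y)$, combined with the fact that precomposing the action by an automorphism of the acting group does not change the isomorphism type of the semidirect product. Your explicit check that $\gamma$ and $\mu-1$ are units in $\mathbb{Z}_q$ (so the reparametrizations are genuinely automorphisms) is a small point the paper leaves implicit, but otherwise the arguments coincide.
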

	\begin{proof}
		First, we handle the case of $\Lambda_{F_{\gamma}}$.  Observe that the action $\delta_{\gamma}$ (see Equation \eqref{eq:delta of Alambda}) can be decomposed as  
		$$ \delta_{\gamma} = \psi h_{\gamma}.$$ 
	where $\psi$ is given in Equation \eqref{eq:psi of D}
		and 
		\begin{align*}
 h_{\gamma} \in \Aut(\mathbb{Z}_q \times \mathbb{Z}_q), \quad h_{\gamma}(x,y) = (x+(\gamma-1)y, \gamma y),
		\end{align*}
	 for all \( x, y \in \mathbb{Z}_q \) and \( a, b \in \mathbb{Z}_p \). Thus, by Theorem \ref{semi isomorphism}, we obtain that
		\[
		\Lambda_{F_{\gamma}} \cong (\mathbb{Z}_p \times \mathbb{Z}_p) \rtimes_{\psi} (\mathbb{Z}_q \times \mathbb{Z}_q), \text{ for all } 1< \gamma < q.
		\]  
		The case of \( \Lambda_{G_{\mu}} \) is similar to that of \( F_{\gamma} \). Here, we take the maps  $\beta$ and $h_{\mu}$ in the following way:
		\begin{align*}
			\beta_{(x,y)}(a,b) & =  ((g^{x+y}\cdot a + (1-g^{x})g^{y}\cdot b, g^{y}\cdot b)) \\
			h_{\mu}(x,y) = & (x, (\mu - 1)y).
		\end{align*}
		It is easy to see that  $\delta_{\mu} = \beta h_{\mu}$ (see Equation \eqref{eq:delta of Amu} for  $\delta_{\mu}$). Then, by Theorem \ref{semi isomorphism}, we obtain that 
		\[
		\Lambda_{G_{\mu}} \cong (\mathbb{Z}_p \times \mathbb{Z}_p) \rtimes_{\beta} (\mathbb{Z}_q \times \mathbb{Z}_q),
		\]
		for all \( 1 < \mu \leq q \).
	\end{proof} 
	
	\begin{lemma} \label{lemma:ismorphism of A3}
		The groups $\Lambda_{ D}$, $\Lambda_{F_{\gamma}}$ for $1 < \gamma < q$, and $\Lambda_{G_{\mu}}$ for $1 < \mu \leq q$ are isomorphic.
	\end{lemma}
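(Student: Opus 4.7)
Since Proposition \ref{prop:Agammaisomorphism} has already established $\Lambda_{F_\gamma} \cong (\mathbb{Z}_p \times \mathbb{Z}_p) \rtimes_\psi (\mathbb{Z}_q \times \mathbb{Z}_q) \cong \Lambda_D$ for every $1 < \gamma < q$, the only outstanding claim is $\Lambda_{G_\mu} \cong \Lambda_D$. By the same proposition, $\Lambda_{G_\mu} \cong (\mathbb{Z}_p \times \mathbb{Z}_p) \rtimes_\beta (\mathbb{Z}_q \times \mathbb{Z}_q)$ with $\beta$ independent of $\mu$ and given by $\beta_{(x,y)}(a,b) = \bigl(g^{x+y} a + (1-g^x) g^y b,\; g^y b\bigr)$. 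My plan is to diagonalize $\beta$ by a $\mathbb{Z}_p$-linear change of basis on $\mathbb{Z}_p \times \mathbb{Z}_p$ and then recognize the diagonalized action as $\psi$ precomposed with an automorphism of the acting group.

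Concretely, viewed as an upper triangular two-by-two matrix with diagonal entries $g^{x+y}$ and $g^y$, the operator $\beta_{(x,y)}$ is simultaneously diagonalizable by a single conjugator. I will take $f \in \Aut(\mathbb{Z}_p \times \mathbb{Z}_p)$ defined by $f(a,b) = (a-b,\, b)$ and verify through a short matrix computation that $f\,\beta_{(x,y)}\,f^{-1} = \psi_{(x+y,\, y)}$ for all $x, y \in \mathbb{Z}_q$. Writing $h(x,y) = (x+y,\, y)$, this identity becomes $f \beta f^{-1} = \psi \circ h$, and $h$ is clearly in $\Aut(\mathbb{Z}_q \times \mathbb{Z}_q)$ with inverse $(x,y) \mapsto (x-y,\, y)$.

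Finally, I assemble the isomorphism in two standard steps. First, conjugation by $f$ in $\Aut(\mathbb{Z}_p \times \mathbb{Z}_p)$ yields a semidirect product isomorphism
\[
(\mathbb{Z}_p \times \mathbb{Z}_p) \rtimes_\beta (\mathbb{Z}_q \times \mathbb{Z}_q) \;\longrightarrow\; (\mathbb{Z}_p \times \mathbb{Z}_p) \rtimes_{\psi \circ h} (\mathbb{Z}_q \times \mathbb{Z}_q), \qquad (v,w) \mapsto (f(v),\, w),
\]
which one checks directly from the semidirect product law. Second, Theorem \ref{semi isomorphism} applied to the automorphism $h$ of the acting factor produces a further isomorphism onto $(\mathbb{Z}_p \times \mathbb{Z}_p) \rtimes_\psi (\mathbb{Z}_q \times \mathbb{Z}_q) \cong \Lambda_D$. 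Composing yields the desired $\Lambda_{G_\mu} \cong \Lambda_D$. The main obstacle is spotting the correct conjugator $f$ that diagonalizes the parametric family $\beta_{(x,y)}$ uniformly in $(x,y)$; once this is written down, the verification of $f\beta f^{-1} = \psi \circ h$ and the check that $h$ is an automorphism are both routine.
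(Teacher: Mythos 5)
Your proposal is correct, and for the $\Lambda_{G_\mu}$ part it takes a genuinely different route from the paper. The paper also disposes of $\Lambda_D \cong \Lambda_{F_\gamma}$ exactly as you do, via Proposition \ref{prop:Agammaisomorphism}; but for $\Lambda_{G_\mu}$ it does not touch the action $\beta$ directly. Instead it specializes to $\mu = 2$, passes to the opposite skew brace $G_2^{\op}$ and invokes $\Lambda_{G_2} \cong \Lambda_{G_2^{\op}}$ (Lemma \ref{prop:AcongAop}(i)); the lambda map of the opposite brace is already diagonal, giving an action $\chi_{(x,y)}(a,b) = (g^{x+2y}a, g^y b)$, which equals $\psi$ precomposed with $(x,y)\mapsto(x+2y,y)$, so Theorem \ref{semi isomorphism} finishes. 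You instead diagonalize $\beta$ itself by conjugating with $f(a,b)=(a-b,b)$ on the normal factor; I checked the computation and indeed $f\,\beta_{(x,y)}\,f^{-1} = \psi_{(x+y,y)}$, so $\beta$ becomes $\psi\circ h$ with $h(x,y)=(x+y,y)\in\Aut(\mathbb{Z}_q\times\mathbb{Z}_q)$, and your two-step assembly (change of basis on the kernel, then Theorem \ref{semi isomorphism} on the acting factor) is sound. The trade-off: your argument needs the standard but unstated-in-the-paper fact that replacing the action $\beta$ by $f\beta(\cdot)f^{-1}$ for $f$ an automorphism of the kernel yields an isomorphic semidirect product via $(v,w)\mapsto(f(v),w)$ --- routine to verify from the product law, as you note --- while the paper's route stays entirely within its stated toolkit (the opposite-brace isomorphism plus Theorem \ref{semi isomorphism}) at the cost of introducing $G_2^{\op}$ and recomputing its lambda map. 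Your version also has the mild aesthetic advantage of treating all $\mu$ uniformly through the single $\mu$-independent action $\beta$, rather than reducing to one representative $\mu=2$.
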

	\begin{proof}
		
	The groups \( \Lambda_{ D} \) and \( \Lambda_{F_{\gamma}} \) are  isomorphic, as both are isomorphic to  
	\[
	(\mathbb{Z}_p \times \mathbb{Z}_p) \rtimes_{\psi} (\mathbb{Z}_q \times \mathbb{Z}_q).\qquad \text{(see Equation \eqref{eq:psi of D})} 
	\]  
Using Proposition \ref{prop:Agammaisomorphism},  it is enough to  show that \( \Lambda_{F_{\gamma}} \) and \( \Lambda_{G_{\mu}} \) are isomorphic for some \( 1 < \gamma < q \) and \( 1 < \mu \leq q \).  For $(n,m), (s,t) \in G_2$, we have  
		\[
		\lambda^{\op}_{(n,m)}(s, t) = (g^{2m} \cdot s, m),
		\]  
		and we know that \( \Lambda_{G_{2}} \cong \Lambda_{G^{\op}_2} \). Direct calculations show that  
		\[
		\Lambda_{G^{\op}_{2}} \cong (\mathbb{Z}_p \times \mathbb{Z}_p) \rtimes_{\chi} (\mathbb{Z}_q \times \mathbb{Z}_q),
		\]  
		where  
		\[
		\chi_{(x,y)}(a,b) = (g^{x+2y} \cdot a, g^y \cdot b).
		\]  
		Define an automorphism \( f: \mathbb{Z}_q \times \mathbb{Z}_q \to \mathbb{Z}_q \times \mathbb{Z}_q \) by  
		\[
		f(x,y) = (x+2y, y).
		\]  
		Then \( \chi = \psi f \). From Proposition \ref{prop:Agammaisomorphism}, the groups \( \Lambda_{G_{\mu}} \), \( \Lambda_{F_{\gamma}} \), and \( \Lambda_{D} \) are isomorphic.
	\end{proof}
	
	\begin{lemma} \label{lemma:isomorphism of A4}
		The groups $ E$ and $F_q$  are isomorphic.
	\end{lemma}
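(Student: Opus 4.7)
The plan is to reduce both $\Lambda_E$ and $\Lambda_{F_q}$ to the same common semidirect product, namely $(\mathbb{Z}_p \times \mathbb{Z}_p) \rtimes_{\xi'} (\mathbb{Z}_q \times \mathbb{Z}_q)$ with the simple diagonal action $\xi'_{(x,y)}(a,b) = (g^x \cdot a,\, b)$. Both reductions are done via Theorem~\ref{semi isomorphism}, in the two guises already used in the proofs of Proposition~\ref{prop:Agammaisomorphism} and Lemma~\ref{lemma:ismorphism of A3}: one can replace the action by a conjugate via an automorphism of the additive factor, and also by a precomposition with an automorphism of the multiplicative factor.

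The first step is to diagonalize the affine-looking action $\xi_{(x,y)}(a,b) = (g^x a + b(1-g^x),\,b)$ of $\Lambda_E$. Take the automorphism $f \in \Aut(\mathbb{Z}_p \times \mathbb{Z}_p)$ given by $f(a,b) = (a-b,\,b)$, with inverse $f^{-1}(a,b) = (a+b,\,b)$. A direct calculation yields
\[
f\,\xi_{(x,y)}\,f^{-1}(a,b) \;=\; f\bigl(g^x(a+b) + b(1-g^x),\,b\bigr) \;=\; f\bigl(g^x a + b,\,b\bigr) \;=\; (g^x a,\,b) \;=\; \xi'_{(x,y)}(a,b),
\]
so by Theorem~\ref{semi isomorphism} applied to $f$, one gets $\Lambda_E \cong (\mathbb{Z}_p \times \mathbb{Z}_p) \rtimes_{\xi'} (\mathbb{Z}_q \times \mathbb{Z}_q)$.

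The second step matches $\Lambda_{F_q}$, whose action is ${\delta_q}_{(x,y)}(a,b) = (g^{x-y}\cdot a,\,b)$ (Equation~\eqref{eq:deltaq}), with the same target. Take the automorphism $h \in \Aut(\mathbb{Z}_q \times \mathbb{Z}_q)$ given by $h(x,y) = (x-y,\,y)$. Then
\[
(\xi' \circ h)_{(x,y)}(a,b) \;=\; \xi'_{(x-y,\,y)}(a,b) \;=\; (g^{x-y}\cdot a,\,b) \;=\; {\delta_q}_{(x,y)}(a,b),
\]
i.e.\ $\delta_q = \xi' \circ h$. Applying Theorem~\ref{semi isomorphism} exactly as in Proposition~\ref{prop:Agammaisomorphism} (with $h$ in the role of $h_\gamma$) gives $\Lambda_{F_q} \cong (\mathbb{Z}_p \times \mathbb{Z}_p) \rtimes_{\xi'} (\mathbb{Z}_q \times \mathbb{Z}_q)$, and combining with the first step yields $\Lambda_E \cong \Lambda_{F_q}$, as desired.

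There is no real obstacle; the only non-mechanical point is spotting the diagonalizing automorphism $f$. This is guided by the observation that the affine correction $b(1 - g^x) = b - g^x b$ in $\xi$ is exactly the defect that arises from conjugating a diagonal action by the shear $(a,b) \mapsto (a-b, b)$, which forces the choice of $f$; once $f$ is in hand, the remaining change of variable $h$ on $\mathbb{Z}_q \times \mathbb{Z}_q$ is immediate.
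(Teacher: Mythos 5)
Your proof is correct, but it takes a different route from the paper. The paper first replaces $E$ by its opposite brace, using $\Lambda_E\cong\Lambda_{E^{\op}}$ and the computation $\lambda^{\op}_{(n,m)}(s,t)=(g^m\cdot s,t)$, so that the action is already diagonal, namely $\eta_{(x,y)}(a,b)=(g^{x+y}\cdot a,b)$; it then matches $\eta$ with $\delta_q$ by precomposing with $f(x,y)=(x,-y)$ on $\mathbb{Z}_q\times\mathbb{Z}_q$. You instead work directly with $\xi$ and diagonalize it by conjugation with the shear $f(a,b)=(a-b,b)$ on the Sylow $p$-part (your computation $f\,\xi_{(x,y)}\,f^{-1}=\xi'_{(x,y)}$ checks out: $\xi_{(x,y)}(a+b,b)=(g^xa+b,b)$, and applying $f$ gives $(g^xa,b)$), and then match with $\delta_q$ by the change of variables $h(x,y)=(x-y,y)$ on $\mathbb{Z}_q\times\mathbb{Z}_q$. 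One caveat: the two "guises" you invoke are not both what the paper actually uses elsewhere. Proposition \ref{prop:Agammaisomorphism} and Lemma \ref{lemma:ismorphism of A3} only ever use precomposition with an automorphism of the acting group $\mathbb{Z}_q\times\mathbb{Z}_q$; the conjugation-by-$\Aut(\mathbb{Z}_p\times\mathbb{Z}_p)$ step you need in your first reduction is a separate (equally standard) fact, which does match the literal wording of Theorem \ref{semi isomorphism} with $f\in\Aut(H)$ provided one reads $\psi f$ as the conjugated action $g\mapsto f^{-1}\psi_g f$, with isomorphism $(h,g)\mapsto(f(h),g)$. Since you verify the conjugation identity explicitly, this is not a gap, only a point where the citation should be made precise. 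What each approach buys: the paper's argument leans on the opposite-brace isomorphism of Lemma \ref{prop:AcongAop}(i) and avoids any automorphism of the additive factor; yours avoids the opposite brace entirely at the cost of finding the diagonalizing shear, and has the minor advantage of exhibiting both $\Lambda_E$ and $\Lambda_{F_q}$ explicitly as $(\mathbb{Z}_p\times\mathbb{Z}_p)\rtimes_{\xi'}(\mathbb{Z}_q\times\mathbb{Z}_q)$ with the same simple action $\xi'_{(x,y)}(a,b)=(g^x\cdot a,b)$, which fits naturally with the normal forms used in Theorem \ref{thm:classification}.
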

	\begin{proof}
	Let $(n,m), (s,t) \in E$, the opposite of associated lambda map of $E$ is given by  
	\[
	\lambda^{\op}_{(n,m)}(s,t) = (g^m \cdot s, t).
	\]  
	Since \( \Lambda_E \cong \Lambda_{E^{\op}} \), through calculations, we get
	\[
	\Lambda_{E^{\op}} \cong (\mathbb{Z}_p \times \mathbb{Z}_p) \rtimes_{\eta} (\mathbb{Z}_q \times \mathbb{Z}_q),
	\]  
	where \( \eta_{(x,y)}(a,b) = (g^{x+y} \cdot a, b) \), for all \( x, y \in \mathbb{Z}_q \) and \( a, b \in \mathbb{Z}_p \).\\  
	Now, consider the automorphism \( f: \mathbb{Z}_q \times \mathbb{Z}_q \to \mathbb{Z}_q \times \mathbb{Z}_q \) given by \( f(x,y) = (x,-y) \). From Equation \eqref{eq:deltaq}, we have \( \delta_{q} = \eta f \), which implies that the groups \( \Lambda_{E} \) and \( \Lambda_{F_{q}} \) are isomorphic.	
	\end{proof}
	
	In the next subsection, we classify the irreducible representations for each skew brace of size $pq$. With the help of Theorem \ref{thm:mainresult} and the results obtained above, we then classify, up to isomorphism, the groups associated with the skew left braces in Theorem \ref{thm:classification}.
	
\subsection{Irreducible representations of skew left braces of order $pq$} \label{subsec:irrrepsskewbracepq}
Throughout this section, all the representations are over the field of complex numbers. We denote the set of all inequivalent irreducible representations of $G$ by $\Irr(G)$, and the set of the dimensions of irreducible representations of $G$ by $\cd(G)$.

In this section, we classify the irreducible representations of skew left braces of order $pq$ given in Theorem \ref{classification}. From Remark \ref{remark:equivalence of reps}, we achieve our goal by classifying the irreducible representations of the groups of order $p^2q^2$ associated with the skew left braces of size $pq$.
  Before we prove our results, we mention a few fundamental results in representation theory of finite groups that we use in our computations. 

\begin{lemma}  \label{lemma:represults}
	Let $G$ be a group. Then
	\begin{enumerate}
		\item [\rmfamily(i)] \textnormal{(\cite[Corollary 2.6]{IBook})} every irreducible representation is 1-dimensional if and only if $G$ is abelian.
		\item [\rmfamily(ii)] \textnormal{(\cite[Corollary 2.7]{IBook})} The number of irreducible representations of $G$ equals the number of conjugacy classes of $G$, and
		\[ |G| = \sum_{\rho \in \Irr(G)} \left( \dim(\rho) \right)^{2}. \]
		\item [\rmfamily(iii)] \textnormal{(\cite[Corollary 2.23]{IBook})} the number of 1-dimensional representations is equal to the index of the commutator subgroup in $G$.
		\item [\rmfamily(iv)] \textnormal{(\cite[Theorem 3.11]{IBook})} $\dim(\rho)$ divides the order of $G$, for every $\rho \in \Irr(G)$.
	\end{enumerate}
	
\end{lemma}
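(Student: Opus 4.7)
The plan is to observe that each of the four items in Lemma \ref{lemma:represults} is a classical result in the ordinary character theory of finite groups, so the proof reduces to citing the standard references (the author has already pointed to the relevant results in Isaacs' book). Nevertheless, for the sake of a self-contained preliminaries section, I would briefly recall the underlying ingredients for each item in turn.

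For item (i), my approach is Schur's lemma: if $G$ is abelian, then every element of $G$ acts by a $G$-module endomorphism on any irreducible representation, hence by a scalar, so the representation is one-dimensional. Conversely, if every irreducible representation of $G$ is one-dimensional, then by complete reducibility every matrix $\rho(g)$ in the regular representation is simultaneously diagonalizable, which forces $G$ to embed into a torus and hence to be abelian.

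For items (ii) and (iii), I would use the standard orthogonality framework. The number of inequivalent irreducible representations equals the dimension of the space of complex class functions on $G$, which is $k(G)$; the identity $|G| = \sum_{\rho \in \Irr(G)} (\dim \rho)^2$ is then immediate from the decomposition of the regular representation as $\bigoplus_{\rho \in \Irr(G)} (\dim \rho)\, \rho$. Item (iii) is a direct consequence of the fact that one-dimensional representations of $G$ factor through the abelianization $G/G'$, whence their number is exactly $[G : G']$.

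The only item requiring a genuinely nontrivial argument is (iv), the Frobenius divisibility theorem; here I would sketch the standard proof via algebraic integers. Using the fact that character values are algebraic integers and that class sums act as scalars on an irreducible module with eigenvalues that are algebraic integers, one shows that $|G|/\dim(\rho)$ is a rational algebraic integer, hence an integer. I do not expect any obstacle here since the detailed argument is exactly \cite[Theorem 3.11]{IBook}, which I would cite as the definitive reference.
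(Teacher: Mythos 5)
Your proposal is correct and matches the paper's treatment: the paper offers no proof of this lemma, simply citing the standard results in Isaacs' book, and your sketches of the classical arguments (Schur's lemma, the regular representation decomposition, factoring linear characters through $G/G'$, and Frobenius divisibility via algebraic integers) are the standard proofs behind those citations. No gaps to report.
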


The following result provides a necessary and sufficient condition for a group to have $\{ 1, p \}$ as its set of the irreducible representation dimensions.
\begin{lemma} \textnormal{\cite[Theorem 12.11]{IBook}} \label{lemma:cdG=1,p}
	Let $G$ be a non-abelian group. Then $\cd(G) = \{ 1, p \}$ if and only if one of the following holds.
	\begin{enumerate}
		\item [\rmfamily(i)] There exists an abelian normal subgroup $A$ of $G$ of index $p$.
		\item [\rmfamily(ii)] $|G:Z(G)| = p^3$.
	\end{enumerate} 
\end{lemma}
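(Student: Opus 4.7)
The plan is to prove the two directions of the biconditional separately: the backward direction is straightforward character-theoretic arithmetic, while the forward direction requires delicate structural analysis.

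For the backward direction, I would handle cases (i) and (ii) separately. In case (i), since $A \trianglelefteq G$ is abelian of index $p$, Ito's theorem gives $\chi(1) \mid [G:A] = p$ for every $\chi \in \Irr(G)$, so $\chi(1) \in \{1,p\}$, and $p$ is actually attained because $G$ is non-abelian. In case (ii), I would invoke two classical bounds, $\chi(1)^2 \le [G:Z(G)]$ and $\chi(1) \mid [G:Z(G)]$ (both following from $\chi(1)^2 \le [G:Z(\chi)]$ combined with $Z(G) \subseteq Z(\chi)$). Combined with $[G:Z(G)] = p^3$, these force $\chi(1)$ to be a divisor of $p^3$ of size at most $p^{3/2}$, hence $\chi(1) \in \{1,p\}$; again $G$ non-abelian ensures $p \in \cd(G)$.

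For the forward direction, assume $\cd(G) = \{1,p\}$ and that (i) fails; the goal is to show $[G:Z(G)] = p^3$. Fix any degree-$p$ character $\chi$ (which exists by non-abelianness). The bound $\chi(1)^2 \le [G:Z(G)]$ yields $[G:Z(G)] \ge p^2$, while $\chi(1) \mid [G:Z(G)]$ gives $p \mid [G:Z(G)]$. To pin down the exact value $p^3$, I would run an induction on $|G|$: pick a minimal normal subgroup $N$ of $G$, split $\Irr(G)$ into characters containing $N$ in their kernels versus those that do not, and apply Clifford theory together with the inductive hypothesis to $G/N$. One must then verify that the failure of (i) in $G$ forces a controllable structure on $G/N$ and on the action of $G$ on $N$, ultimately pinning $[G:Z(G)]$ to exactly $p^3$. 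A useful auxiliary step is to choose a maximal abelian normal subgroup $A \trianglelefteq G$; maximality gives $C_G(A) = A$, so $G/A$ embeds in $\Aut(A)$, which constrains how large the non-central part of $G$ can be.

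The main obstacle is the forward direction, specifically the upper bound $[G:Z(G)] \le p^3$: the matching lower bound $p^2$ is immediate, but excluding central indices $p^k$ with $k \ge 4$ requires an intricate inductive argument making essential use of the hypothesis that no abelian normal subgroup of index $p$ exists. I would also need to be careful that the failure of (i) is preserved under the relevant quotients in the induction, which is a subtle structural point and the place where the proof genuinely departs from a pure character-theoretic computation.
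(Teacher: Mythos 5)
This lemma is not proved in the paper at all: it is quoted verbatim from Isaacs (Theorem 12.11 of \cite{IBook}), so there is no in-paper argument to compare against. Judged on its own terms, your backward direction is fine: Ito's theorem handles case (i), and in case (ii) the standard facts $\chi(1)^2 \le |G:Z(\chi)| \le |G:Z(G)|$ and $\chi(1) \mid |G:Z(\chi)| \mid |G:Z(G)| = p^3$ force $\chi(1) \in \{1,p\}$, with $p$ attained since $G$ is non-abelian.

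The forward direction, however, is a genuine gap rather than a proof: everything after ``run an induction on $|G|$'' is a plan whose decisive step is left unexecuted. Picking a minimal normal subgroup $N$, splitting $\Irr(G)$ by whether $N \le \ker\chi$, and taking a maximal abelian normal subgroup $A$ with $C_G(A)=A$ constrains $G/A$ and $G/N$, but none of this, as stated, pins $|G:Z(G)|$ to exactly $p^3$, nor does it explain why the failure of (i) passes to the quotients you induct on --- you yourself flag both points as ``to be verified,'' and they are precisely the content of the theorem. The missing idea in the classical argument is a reduction theorem (Isaacs' Theorem 12.5, going back to Isaacs--Passman): if $\cd(G)=\{1,p\}$ and $G$ has no abelian normal subgroup of index $p$, then $G$ is the direct product of a $p$-group and an abelian group; one is thereby reduced to the case of a $p$-group $P$ with $\cd(P)=\{1,p\}$, for which a separate argument shows that either $P$ has an abelian subgroup of index $p$ (automatically normal there) or $|P:Z(P)|=p^3$. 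Without this reduction, or some equally substantial substitute, the proposed induction does not go through, so the forward implication remains unproved. Since the paper only uses the statement as a black box, the honest options are to cite it as the paper does or to supply the full Isaacs--Passman-style argument; the present sketch does neither.
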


For solvable groups, we have the following result.
\begin{lemma} \textnormal{\cite[Corollary 12.34]{IBook}} \label{lemma:pNotDivideCharacterdegrees}
	Let $G$ be a solvable group. Then $G$ has a normal abelian Sylow $p$-subgroup of $G$ if and only if every element of $\cd(G)$ is relatively prime to $p$.
\end{lemma}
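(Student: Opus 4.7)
The plan is to prove the two implications separately; one direction is an immediate application of Ito's theorem, while the other requires induction on $|G|$ using Clifford theory for solvable groups.

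For the forward implication, assume $G$ has a normal abelian Sylow $p$-subgroup $P$. Recall Ito's theorem: if $A \trianglelefteq G$ is abelian, then $\chi(1) \mid [G:A]$ for every $\chi \in \Irr(G)$. Applying this with $A = P$, every character degree divides $[G:P]$. Since $P$ is a Sylow $p$-subgroup, $[G:P]$ is coprime to $p$, so no character degree is divisible by $p$. This direction is routine.

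For the converse, suppose every $\chi(1) \in \cd(G)$ is coprime to $p$; we induct on $|G|$. The base case $|G|=1$ is trivial. Let $N$ be a minimal normal subgroup of $G$. Since $G$ is solvable, $N$ is an elementary abelian $r$-group for some prime $r$. Using Clifford's theorem, every $\chi \in \Irr(G)$ restricts as a sum of $G$-conjugates of some $\theta \in \Irr(N)$, and $[G:I_G(\theta)]$ divides $\chi(1)$; hence inertia indices are prime to $p$. Applying the inductive hypothesis to $G/N$ (whose character degrees are a subset of $\cd(G)$), we get that $G/N$ has a normal abelian Sylow $p$-subgroup $P/N$. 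We then analyze two cases depending on whether $r = p$ or $r \neq p$: if $r \neq p$, a Sylow $p$-subgroup of $P$ projects isomorphically onto $P/N$ and is normal by a Frattini-type argument; if $r = p$, one uses Clifford theory more carefully together with the fact that $p \nmid \chi(1)$ for all $\chi$ to show that $N \leq Z(P)$ and that $P$ itself must be abelian and normal.

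The main obstacle is the converse direction, specifically handling the case where the minimal normal subgroup $N$ is itself a $p$-group: here one must leverage the hypothesis that no character degree is divisible by $p$ to force abelianness of the full Sylow $p$-subgroup, rather than only of $P/N$. This is where Ito's theorem is used in tandem with the structure of $N$ as a faithful $G/C_G(N)$-module, and it is the technical heart of Isaacs' Corollary 12.34 from which we cite the result.
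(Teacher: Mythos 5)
First, note that the paper does not prove this lemma at all: it is quoted directly from Isaacs \cite[Corollary 12.34]{IBook} (the solvable case of the It\^{o}--Michler theorem), so there is no internal proof to compare against and your write-up has to be judged as a standalone argument. Your forward implication is complete and correct: It\^{o}'s theorem applied to the normal abelian Sylow $p$-subgroup $P$ gives $\chi(1)\mid [G:P]$ for every $\chi\in\Irr(G)$, and $[G:P]$ is prime to $p$.

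The converse, however, is only an outline, and that is exactly where the substance of the theorem lies. Concretely: (1) in your $r\neq p$ case the ``Frattini-type argument'' does not give normality of the Sylow $p$-subgroup. If $P$ is the preimage in $G$ of the normal abelian Sylow $p$-subgroup of $G/N$ and $P_p$ is a Sylow $p$-subgroup of $P$, Frattini only yields $G=N\,N_G(P_p)$; to get $P_p\trianglelefteq G$ you still need $[N,P_p]=1$, and this does not follow from the group-theoretic setup alone --- you must re-use the degree hypothesis (for instance, via \cite[Corollary 11.29]{IBook} every $\theta\in\Irr(P)$ lying under some $\chi\in\Irr(G)$ has $\chi(1)/\theta(1)$ dividing the $p'$-number $[G:P]$, so all degrees of $P$ are prime to $p$ and induction applies to $P$ when $P<G$; the case $P=G$, i.e.\ $G/N$ a $p$-group, needs a separate argument). (2) Your $r=p$ case is stated only as ``one uses Clifford theory more carefully \dots to show $N\leq Z(P)$ and that $P$ must be abelian and normal,'' and you close by deferring the ``technical heart'' back to Isaacs. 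So as a proof the hard direction is not established but re-cited. Since the paper itself treats the statement as a quoted textbook result, citing Isaacs is entirely acceptable; but if you intend your text to be a proof, the converse direction must be carried out in full rather than sketched.
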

\noindent Observe that if $G$ is a group of order $p^2q^2$ ($p>q$), then Sylow $p$-subgroup is unique in $G$. Then, by Lemma \ref{lemma:pNotDivideCharacterdegrees}, every element of $\cd(G)$ is relatively prime to $p$. Hence, by Lemma \ref{lemma:represults}, the possibility for the dimension of an irreducible representation of a group of order $p^2q^2$ is 1, $q$ or $q^2$. We shall be using this information to prove our results.

In Lemma \ref{lemma:action not faithful}, we compute the irreducible representations for a class of groups of order $p^2q^2$.

\begin{lemma} \label{lemma:action not faithful}
Let \( G \) be a non-abelian group of order \( p^2q^2 \). Then \( \cd(G) = \{1, q, q^2\} \) if and only if the action of the Sylow \( q \)-subgroup of \( G \) on the Sylow \( p \)-subgroup of \( G \) is faithful.
Moreover, if the action is not faithful, then there are exactly \( p^{2-i}q^2 \) one-dimensional representations and \( (p^2 - p^{2-i}) \) irreducible representations of dimension \( q \), where \( |G'| = p^i \) for some \( i \) with \( 1 \leq i \leq 2 \).
\end{lemma}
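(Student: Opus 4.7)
The plan is to apply Clifford theory to the normal abelian Sylow $p$-subgroup of $G$, reducing the classification of $\Irr(G)$ to an orbit analysis of $Q$ acting on $\Irr(P)$.

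Since $p > q$, the Sylow count $n_p \mid q^2$ together with $n_p \equiv 1 \pmod p$ forces $n_p = 1$, so the Sylow $p$-subgroup $P$ is normal in $G$; by Schur--Zassenhaus, $G = P \rtimes Q$ with $P$ and $Q$ abelian of orders $p^2$ and $q^2$ respectively. For $\theta \in \Irr(P)$, the inertia group is $I_G(\theta) = P \rtimes Q_\theta$ where $Q_\theta$ is the stabilizer of $\theta$ in $Q$; since $\gcd(|P|, |Q_\theta|) = 1$, $\theta$ extends to some $\widehat{\theta}$ on $I_G(\theta)$, and the irreducibles of $G$ lying over $\theta$ are precisely the induced characters $(\widehat{\theta} \cdot \psi)^G$ for $\psi \in \Irr(Q_\theta)$, each of dimension $[Q : Q_\theta] \cdot \dim \psi = [Q : Q_\theta]$ (since $Q_\theta$ is abelian). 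Hence $\cd(G)$ coincides with the set of $Q$-orbit sizes on $\Irr(P)$ and is contained in $\{1, q, q^2\}$. A routine commutator calculation shows $G' = [Q, P]$ (the inclusion $[Q,P] \subseteq G'$ is immediate, and the reverse follows because $G/[Q,P]$ is a quotient of the abelian group $P \times Q$), so the number of one-dimensional representations equals $|G : G'| = p^2 q^2 / |G'|$.

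For the iff statement, set $K := \Ker(Q \to \Aut(P))$. If the action is not faithful then $|K| \geq q$, and $|K| = q^2$ would make $G$ abelian, contrary to hypothesis, so $|K| = q$. Every stabilizer $Q_\theta$ then contains $K$, so all orbit sizes divide $q$ and no irreducible of dimension $q^2$ exists; this gives the forward direction by contrapositive. Conversely, when the action is faithful, I would use that $Q$ is elementary abelian $\cong (\mathbb{Z}_q)^2$ (the intended setting in the paper) to force $P \cong (\mathbb{Z}_p)^2$ via the cyclicity of $\Aut(\mathbb{Z}_{p^2})$, and then simultaneously diagonalize $Q$ inside $\GL_2(\mathbb{F}_p)$ as the full $q$-torsion of the diagonal torus $\{\mathrm{diag}(g^a, g^b) : a, b \in \mathbb{Z}_q\}$. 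A direct computation on linear characters $\omega^{\alpha x + \beta y}$ of $P$ then exhibits stabilizers of orders $q^2$ (trivial character), $q$ (characters supported on a single coordinate axis), and $1$ (generic characters with both $\alpha, \beta$ nonzero), producing $Q$-orbits of all three sizes and hence $\cd(G) = \{1, q, q^2\}$.

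For the moreover count: in the non-faithful case $\cd(G) \subseteq \{1, q\}$ by the above, so with $|G'| = p^i$ for $i \in \{1, 2\}$ the number of one-dimensional representations equals $|G : G'| = p^2 q^2 / p^i = p^{2-i} q^2$, and the identity $\sum_\rho (\dim \rho)^2 = |G|$ then forces the number of $q$-dimensional irreducibles to be $(p^2 q^2 - p^{2-i} q^2)/q^2 = p^2 - p^{2-i}$. The principal technical obstacle is the converse of the iff: faithfulness alone does not produce orbits of all three sizes (e.g., a cyclic $Q \cong \mathbb{Z}_{q^2}$ acting faithfully via a single nontrivial eigenvalue on $(\mathbb{Z}_p)^2$ yields only orbits of sizes $1$ and $q^2$), so one must rely on the additional structural fact that $Q$ is elementary abelian and carry out the explicit diagonalization sketched above.
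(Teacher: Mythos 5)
Your proof of the half of the lemma that the paper actually establishes and uses --- non-faithful action implies $\cd(G)=\{1,q\}$ together with the counts $p^{2-i}q^2$ and $p^2-p^{2-i}$ --- is correct, but it runs along a different route. The paper never invokes Clifford theory: it notes that $P\times\ker(\psi)$ is an abelian subgroup of order $p^2q$, normal because its index $q$ is the smallest prime divisor of $|G|$, applies Lemma \ref{lemma:cdG=1,p} (Isaacs' Theorem 12.11) to conclude $\cd(G)=\{1,q\}$, and then gets the counts from $|G:G'|$ and the sum-of-squares identity exactly as in your final paragraph. Your orbit-theoretic argument (extend each $\theta\in\Irr(P)$ to its inertia group, apply Gallagher, conclude that $\cd(G)$ is the set of $Q$-orbit sizes on $\Irr(P)$) proves the same thing while making all of $\Irr(G)$ explicit, at the cost of first justifying $G=P\rtimes Q$; your Sylow count $n_p\mid q^2$, $n_p\equiv 1\pmod p$ has the single exception $(p,q)=(3,2)$ (e.g.\ $A_4\times\mathbb{Z}_3$ has four Sylow $3$-subgroups), but this is harmless since the statement already presupposes that the Sylow $q$-subgroup acts on a normal Sylow $p$-subgroup, and the paper makes the same unqualified claim just before the lemma.

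Where you genuinely depart from the paper is the converse implication, and your caution there is justified: the paper's proof simply never addresses ``faithful $\Rightarrow\cd(G)=\{1,q,q^2\}$'', and your $\mathbb{Z}_{q^2}$ example shows that this implication is false for a general non-abelian group of order $p^2q^2$. Concretely, for $q^2\mid p-1$ (say $p=5$, $q=2$) the group $(\mathbb{Z}_p\times\mathbb{Z}_p)\rtimes\mathbb{Z}_{q^2}$, with a generator acting as $\operatorname{diag}(\zeta,1)$ for $\zeta$ of multiplicative order $q^2$, is non-abelian with faithful Sylow $q$-action and has $\cd=\{1,q^2\}$. So the lemma as stated is too strong; your repaired version --- assume $Q\cong\mathbb{Z}_q\times\mathbb{Z}_q$, force $P\cong\mathbb{Z}_p\times\mathbb{Z}_p$ from the cyclicity of $\Aut(\mathbb{Z}_{p^2})$, conjugate $Q$ into the full $q$-torsion of the split diagonal torus, and read off orbits of sizes $1$, $q$ and $q^2$ --- is the statement that is true in the setting where the paper applies the lemma, since every $\Lambda_A$ there has Sylow $q$-subgroup $\mathbb{Z}_q\times\mathbb{Z}_q$. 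None of the paper's downstream applications are affected, because only the non-faithful direction and the counts are ever used (for $C$ and $E$; $D$ is handled directly as $H\times H$), but the lemma itself should either carry the extra hypothesis on $Q$ or be weakened to the implication that you and the paper both prove.
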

\begin{proof}
	Let \( G = P \rtimes_{\psi} Q \) such that \( \ker(\psi) \) is nontrivial. Then \( P \times \ker(\psi) \) is a subgroup of \( G \). Since \( \ker(\psi) \leq Q \), we must have \( |\ker(\psi)| = q \) or \( q^2 \). However, if \( |\ker(\psi)| = p^2 \), then \( G \) would contain an abelian subgroup of order \( p^2q^2 \), contradicting the non-abelian nature of \( G \). Hence, we conclude that \( |\ker(\psi)| = q \), and consequently, \( P \times \ker(\psi) \) is an abelian subgroup of order \( p^2q \) in \( G \).
	
	Moreover, \( P \times \ker(\psi) \) is normal in \( G \) since its index in \( G \) is \( q \) and \( p > q \). By Lemma \ref{lemma:cdG=1,p}, we have \( \cd(G) = \{ 1, q \} \). Let \( |G'| = p^i \) for some \( i \) with \( 1 \leq i \leq 2 \). From Lemma \ref{lemma:represults}, the number of 1-dimensional representations of \( G \) is given by \( |G/G'| \), and we have the relation
	\[
	|G| = p^2q^2 = |G/G'| + r \cdot q^2,
	\]
	where \( r \) denotes the number of irreducible representations of \( G \) of dimension \( q \). Therefore, there are exactly \( p^{2-i}q^2 \) one-dimensional representations and \( (p^2 - p^{2-i}) \) irreducible representations of dimension \( q \).
	
\end{proof}

Now, we prove our main result.

\subsection*{Proof of Theorem \ref{thm:mainresult}}
If $p \not\equiv 1 \bmod q$, then there is only the trivial skew brace, say $A$. In this case, $\Lambda_{A}$ is an abelian group of order $p^2 q^2$. By Lemma \ref{lemma:represults}, $\Lambda_A$ does not have any nonlinear irreducible representations and there are $p^2 q^2$ many representations of dimension 1.

Now onwards, $p \equiv 1 \bmod q$ unless stated otherwise. We deal with each skew left brace separately.\\

\noindent \textbf{Representations of $ B$:} Clearly, $\Lambda_{ B}$ is an abelian group of order $p^2 q^2$. Hence, $\Lambda_B$ does not have any nonlinear irreducible representations and there are $p^2 q^2$ many representations of dimension 1.\\

\noindent \textbf{Representations of $ C$:} By Equation \eqref{eq:lambda of A2},  we have
\[
\lambda_{(n,m)}(s, t) = (g^{m} \cdot s, t), \text{ for all } n, s \in \mathbb{Z}_p \text{ and } m, t \in \mathbb{Z}_q.
\] 
Direct calculation shows that the subgroup $\mathbb{Z}_{p} \times \{ 0 \}$ is contained in the kernel of the lambda map of $ C$. Hence, the subgroup $( C, \cdot)  \rtimes_{\lambda} (\mathbb{Z}_{p} \times \{ 0 \})$ is an abelian subgroup of order $p^2q$.
By Lemma \ref{lemma:action not faithful}, $\Lambda_{C}$ has $q^2$ many representations of dimension 1 and $p^2 - 1$ many irreducible representations of dimension $q$.\\

\noindent {\bf Representations of $ D$:}  In this case, $\Lambda_{ D} = \mathbb{Z}_{p} \rtimes \mathbb{Z}_{q} \times \mathbb{Z}_{p} \rtimes \mathbb{Z}_{q}$. Let us denote the group $\mathbb{Z}_{p} \rtimes \mathbb{Z}_{q}$ by $H$ so that $|H'| = p$ and $\Lambda_{D} = H\times H$. Since $H$ has an abelian normal subgroup of index $q$, by Lemma \ref{lemma:cdG=1,p}, we get $\cd(H) = \{ 1, q \}$. Hence, from Lemma \ref{lemma:represults}, there are exactly $q$ many 1-dimensional representations of $H$ and $(p-1)/q$ many $q$-dimensional irreducible representations of $H$. By \cite[Theorem 4.21]{IBook}, for each $\rho \in \Irr(H\times H)$, we have
\[ \dim(\rho) = \dim(\rho_1)\dim(\rho_2), \text{ for some } \rho_1, \rho_2 \in \Irr(H). \]
Therefore,
$\cd(\Lambda_{D}) = \{ 1, q, q^2 \}$ and we have $q^2$ many 1-dimensional representations, $2(p-1)$ many $q$-dimensional irreducible representations and $(p-1)^2/q^2$ many $q^2$-dimensional irreducible representations of $\Lambda_{D}$.  \\

\noindent {\bf Representations of $ E$:} In this case, the subgroup  $(\mathbb{Z}_p \times \mathbb{Z}_p) \rtimes_{\xi} (\{0 \} \times \mathbb{Z}_q) $ is abelian and  has index $q$ in
$\Lambda_{E}$, and $\{0\} \times \mathbb{Z}_q$ is contained in the kernel of $\xi$ (see Equation \eqref{eq:xi of E} for the definition of $\xi$).
By Lemma \ref{lemma:action not faithful}, $\Lambda_{E}$ has $pq^2$ many representations of dimension 1 and $p(p- 1)$ many irreducible representations of dimension $q$. \\

\noindent {\bf  Representations of $F_{\gamma}$:} We have the following two cases:
\begin{enumerate}
	\item [\rmfamily(i):] By Lemma \ref{lemma:ismorphism of A3}, when $1 < \gamma < q$, $\Lambda_{F_{\gamma}}$ has  $q^2$ many 1-dimensional representations, $2(p-1)$ many $q$-dimensional irreducible representations and $(p-1)^2/q^2$ many $q^2$-dimensional irreducible representations.
	\item [\rmfamily(ii):] By Lemma \ref{lemma:isomorphism of A4}, $\Lambda_{F_q}$ has $pq^2$ many representations of dimension 1 and $p(p- 1)$ many irreducible representations of dimension $q$.
\end{enumerate}
\vspace{5mm}

\noindent {\bf  Representations of $G_{\mu}$:} By Lemma \ref{lemma:ismorphism of A3}, when $1 < \gamma \leq q$, $\Lambda_{G_{\mu}}$ has  $q^2$ many 1-dimensional representations, $2(p-1)$ many $q$-dimensional irreducible representations and $(p-1)^2/q^2$ many $q^2$-dimensional irreducible representations. \qed \\

With the help of Theorem \ref{thm:mainresult}, we classify up to isomorphism the groups associated with the skew left braces of size $pq$.

	\begin{thm} \label{thm:classification}
	Let $A$ be a skew left brace of size $pq$. Then $\Lambda_A$ is isomorphic to one of the following groups 
	\begin{enumerate}
		\item $(\mathbb{Z}_{pq} \times \mathbb{Z}_{pq})$,
		\item $(\mathbb{Z}_{p} \times \mathbb{Z}_{p}) \rtimes_{\theta} (\mathbb{Z}_{q} \times \mathbb{Z}_{q})$,
\item $(\mathbb{Z}_{p} \times \mathbb{Z}_{p}) \rtimes_{\psi} (\mathbb{Z}_{q} \times \mathbb{Z}_{q})$,
\item $(\mathbb{Z}_p \times \mathbb{Z}_p) \rtimes_{\eta} (\mathbb{Z}_q \times \mathbb{Z}_q)$,
	\end{enumerate}
	where $\theta$, $\psi$ and $\xi$ are given in Equation \eqref{eq:theta of C}, \eqref{eq:psi of D} and \eqref{eq:xi of E}, respectively.
\end{thm}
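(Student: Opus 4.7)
The plan is to assemble the results of subsection \ref{subsec:classification} and Theorem \ref{thm:mainresult} into a complete classification. First I would dispense with the case $p \not\equiv 1 \bmod q$: by Theorem \ref{classification}, the only skew left brace of size $pq$ is the trivial one, and a direct computation (already carried out at the start of the subsection) yields $\Lambda_A \cong \mathbb{Z}_{pq} \times \mathbb{Z}_{pq}$, which is case (1) of the statement. So I may assume $p \equiv 1 \bmod q$ and run through the six families from Theorem \ref{classification}.

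Next I would match each family to one of the four groups in the list using the explicit semidirect-product descriptions computed in subsection \ref{subsec:classification}. The trivial brace $B$ again gives $\mathbb{Z}_{pq} \times \mathbb{Z}_{pq}$, i.e.\ case (1). The brace $C$ yields $(\mathbb{Z}_p \times \mathbb{Z}_p) \rtimes_{\theta} (\mathbb{Z}_q \times \mathbb{Z}_q)$ directly from Equation \eqref{eq:theta of C}, giving case (2). The braces $D$, $F_{\gamma}$ for $1 < \gamma < q$, and $G_{\mu}$ for $1 < \mu \leq q$ all collapse to case (3) by Lemma \ref{lemma:ismorphism of A3}. Finally, $E$ and $F_q$ collapse to case (4) by Lemma \ref{lemma:isomorphism of A4}, together with the explicit formula \eqref{eq:xi of E} identifying $\Lambda_E$ with $(\mathbb{Z}_p \times \mathbb{Z}_p) \rtimes_{\eta}(\mathbb{Z}_q \times \mathbb{Z}_q)$. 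This already shows that every $\Lambda_A$ is isomorphic to one of the four listed groups.

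To ensure the classification list is irredundant, I would then appeal to Theorem \ref{thm:mainresult} to distinguish the four groups from one another by their representation-theoretic invariants. Case (1) is abelian, so it has $p^2q^2$ linear characters and no higher-dimensional irreducibles. Case (2) has $p^2 - 1$ irreducibles of dimension $q$ and \emph{no} irreducibles of dimension $q^2$. Case (3) is the unique entry admitting irreducibles of dimension $q^2$, of which there are $(p-1)^2/q^2$. Case (4) is non-abelian with $pq^2$ (rather than $q^2$) linear characters. These four character-degree patterns are mutually incompatible, so no two of the four groups in the list can be isomorphic.

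The main obstacle is essentially notational: the real content has already been established in Proposition \ref{prop:Agammaisomorphism}, Lemma \ref{lemma:ismorphism of A3}, Lemma \ref{lemma:isomorphism of A4}, and the case-by-case computations leading up to them. What remains here is to package these results into a single statement and to verify that the four isomorphism types can be separated by counting irreducibles of each dimension. The trickiest bookkeeping step will be keeping the twist automorphisms $\theta$, $\psi$, $\eta$ and the relations $\delta_{\gamma} = \psi h_{\gamma}$, $\delta_{\mu} = \beta h_{\mu}$, $\chi = \psi f$, $\delta_q = \eta f$ straight when invoking Theorem \ref{semi isomorphism}, but none of this requires any new argument.
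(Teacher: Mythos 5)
Your proposal is correct and follows essentially the same route as the paper: it assembles the case-by-case computations together with Proposition \ref{prop:Agammaisomorphism}, Lemma \ref{lemma:ismorphism of A3} and Lemma \ref{lemma:isomorphism of A4} to place every $\Lambda_A$ in the list, and then separates the four isomorphism types by the character-degree counts of Theorem \ref{thm:mainresult}, exactly as the paper's (much terser) proof does. Your write-up is in fact more explicit than the paper's about how the distinguishing invariants rule out coincidences among the four groups, and it correctly notes (implicitly) that the identification of case (4) goes through $\Lambda_{E^{\op}}$ and the map $\eta$ rather than $\xi$ itself.
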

\begin{proof} The proof of part (i) follows from Lemma \ref{lemma:ismorphism of A3}, and that of part (ii) follows from \ref{lemma:isomorphism of A4}. Further, from Theorem \ref{thm:mainresult}, the number of irreducible representations of $A, B, C$ and $D$ are different, which completes the proof.
\end{proof}

We conclude this article with the following examples in which we obtain a $q$-dimensional irreducible representation of $C$, and hence of $\Lambda_{C}$.
\begin{example} \label{example: q dim rep skew brace}
	Let us consider the skew left brace $C$, where $p=3$ and $q=2$. Then $(C, +)$ is the cyclic group of order 6 and $(C, \circ)$ is the dihedral group of order 6, denoted by $D_6$. Note that $D_{6} = \langle (1, 0), (0, 1) \rangle$.
	 Let $\beta$ be a 2-dimensional representation of $(C, +) = \mathbb{Z}_{6}$ defined by
	\[ \beta(1,1) = 
	\left(
	\begin{array}{cc}
		\omega_3 & 0 \\
		0 & \omega_3^{2}
	\end{array}
	\right),
	 \]
	 and $\rho$ be the 2-dimensional irreducible representation of $(C, \circ) = D_6$ given by
	 \[ \rho(1,0) = 
	 \left(
	 \begin{array}{cc}
	 	\omega_3 & 0 \\
	 	0 & \omega_3^{2}
	 \end{array}
	 \right)
	  \text{ and }
	   \rho(0,1) = 
	 \left(
	 \begin{array}{cc}
	 	0 & 1 \\
	 	1 & 0
	 \end{array}
	 \right).
	  \]
	  Here, $\omega_3$ is a primitive cube root of unity and  $\lambda_{(n,m)}^{\op}(s,t) = (2^m \cdot s, t)$. Through routine computations, it is easy to verify that Equation \eqref{relation} holds. Therefore, $(V, \beta, \rho)$ is a 2-dimensional representation of $C$. Since $\rho$ is irreducible, $(V, \beta, \rho)$ is irreducible as well. By \cite[Remark 2.2]{KT24}, 
	  \[ \phi_{(\beta, \rho)}: \Lambda_{ C} \to \GL(V), \quad \phi_{(\beta, \rho)}(a,b) = \beta(a)\rho(b) \]
	  is a representation of $\Lambda_{C}$ given by
	  \begin{align*}
	  	((s,t),(n,0)) & \mapsto \left(
	  	\begin{array}{cc}
	  		\omega_3^{l+n} & 0 \\
	  		0 & \omega_3^{2(l+n)}
	  	\end{array}
	  	\right), \quad ((s,t),(0,1)) \mapsto \left(
	  	\begin{array}{cc}
	  	0 &	\omega_3^{l} \\
	  	 \omega_3^{2l} & 0
	  	\end{array}
	  	\right),\\
	  	((s,t),(1,1)) & \mapsto \left(
	  	\begin{array}{cc}
	  		0 &	\omega_3^{l+1} \\
	  		\omega_3^{2(l+1)} & 0
	  	\end{array}
	  	\right), \quad
	  	((s,t),(2,1)) \mapsto \left(
	  	\begin{array}{cc}
	  		0 &	\omega_3^{2+l} \\
	  		\omega_3^{2l+1} & 0
	  	\end{array}
	  	\right),
	  \end{align*}
	  where $s,n \in \mathbb{Z}_{3}$, $t\in \mathbb{Z}_{2}$ and $l\in \mathbb{N}$ such that $l(1,1) = (s,t)$. It is easy to check that $\phi_{(\beta, \rho)}$ is an irreducible 2-dimensional representation of $\Lambda_{ C}$.
\end{example}
There is another way, through group-theoretic arguments, to obtain a $q$-dimensional irreducible representation of $\Lambda_{C}$. In this, we use \cite[Proposition 25]{SerreBook}.
\begin{example} \label{example:q dim rep group theory}
From Equation \eqref{eq:theta of C}, $\Lambda_{C} \cong P \rtimes_{\theta} Q$, where $P = \mathbb{Z}_{p} \times \{ 0 \} \times \mathbb{Z}_{p} \times \{ 0 \} \cong \mathbb{Z}_{p} \times \mathbb{Z}_{p}$ and $Q = \{ 0 \} \times \mathbb{Z}_{q} \times \{ 0 \} \times \mathbb{Z}_{q} \cong \mathbb{Z}_{q} \times \mathbb{Z}_{q}$.
Let $\chi$ be a 1-dimensional representation of $P$ such that $\chi = \chi_1 \chi_2$, where $\chi_1, \chi_2 \in \Irr(\mathbb{Z}_{p})$ given by $\chi_1(a,0) = 1$ and $\chi_2(b,0) = \omega_p$, where $\omega_p$ is a $p^{th}$ root of unity and $(a, b)$ is a generator of $P$. Let $Q_1$ be the subgroup of $Q$ consisting of those elements $h = (x,y) := ((0,x),(0,y))$ such that $\chi(hkh^{-1}) = \chi(k)$ for all $k \in P$. Since the action of $Q$ on $P$ is given by 
\[ \theta_{(x,y)}(a,b) = (g^{y}\cdot a, g^{y}\cdot b), \]
we obtain 
\[ \chi(((0,x),(0,y))((a,0),(b,0))((0,x),(0,y))^{-1}) = \chi((g^{y}\cdot a, g^{y}\cdot b)) = \omega_{p}^{g^{y}}. \]
Now, $\omega_{p}^{g^{y}} = \chi(a,b) = \chi_1(a,0)\chi_2(b,0) = \omega_p$ implies that $g^{y} \equiv 1 \bmod p$ and $y \equiv 0 \bmod q$. Thus, $(x,y) \notin Q_1$ if $y\neq 0$, and hence, $|Q_1| = q$.\\
Take $G_1 = PQ_1$ so that $|G_1| = p^2 q$. Let $\rho$ be an irreducible representation of $Q_1$ and $\pi$ be the canonical projection of $G_1$ on $Q_1$. Then $\tilde{\rho}:\rho \circ \pi$ is an irreducible representation of $G_1$. The tensor product $\chi \otimes \tilde{\rho}$ is again an irreducible representation of $G_1$, where 
\[ \chi \otimes \tilde{\rho}(t) = \chi(t)\tilde{\rho}(t), \quad \chi(t), \tilde{\rho}(t) \in {\mathbb{C}}^{*}. \]
Let $\theta_{\chi,\rho}$ be the corresponding induced representation of $G$. Then 
\[ \dim\left( \theta_{\chi,\rho} \right) = |\Lambda_{A_{\gamma}} : G_1| \dim\left( \chi \otimes \tilde{\rho} \right) = q \cdot 1 = q. \]
From \cite[Proposition 25]{SerreBook}, $\theta_{\chi,\rho}$ is an irreducible $q$-dimensional representation of $\Lambda_{C}$.
\end{example}
In fact, as is stated in \cite[Proposition 25]{SerreBook}, all the irreducible representations of the group can be obtained following the process given in the example above. \\

\section*{Acknowledgments}
 Nishant Rathee acknowledges the support of the NBHM Postdoctoral Fellowship under grant number 0204/16(1)(1)/2024/R\&D-2/10821.  The author also thanks ISI Delhi for providing a conducive research environment. Ayush Udeep thanks IISER Mohali for an institute post doctoral fellowship.

\end{document}